\documentclass[11pt,a4paper]{article}
\usepackage{authblk}
\usepackage{graphicx}
\usepackage{amsmath}
\usepackage{amssymb}
\usepackage{algorithm}
\usepackage{algorithmic}
\usepackage{amsthm}
\usepackage{epstopdf}
\usepackage{caption}
\usepackage{url}
\usepackage{subcaption}
\usepackage{natbib}
\usepackage{color}
\usepackage{cases}
\newcommand\inner[2]{\langle #1, #2 \rangle}
\setlength{\textwidth}{\paperwidth}
\addtolength{\textwidth}{-6cm}
\setlength{\textheight}{\paperheight}
\addtolength{\textheight}{-4cm}
\addtolength{\textheight}{-1.1\headheight}
\addtolength{\textheight}{-\headsep}
\addtolength{\textheight}{-\footskip}
\setlength{\oddsidemargin}{0.5cm}
\setlength{\evensidemargin}{0.5cm}
\newtheorem{theorem}{Theorem}
\newtheorem{lemma}{Lemma}

\newtheorem{corollary}{Corollary}

\theoremstyle{definition}

%opening
\title{\bf Non-convex Conditional Gradient Sliding
 }
\date{}
\author[1]{Chao Qu}
\author[2]{Yan Li}
\author[2]{Huan Xu}
\affil[1]{Department of Mechanical Engineering, National University of Singapore}
\affil[2]{H. Milton Stewart School of Industrial and Systems Engineering, Georgia Institute of Technology}

\begin{document} 
	\maketitle

\begin{abstract}
	We investigate a projection free method, namely conditional gradient sliding  on  batched, stochastic and finite-sum non-convex problem.  CGS is a smart combination of Nesterov's accelerated gradient method and Frank-Wolfe  (FW) method, and outperforms  FW  in the convex setting by saving gradient computations. However, the study of CGS in the non-convex setting is limited. In this paper, we propose the non-convex conditional gradient sliding (NCGS) which surpasses the non-convex Frank-Wolfe method in batched, stochastic  and finite-sum setting. %In addition, we study the effectiveness of lower smoothness parameter $\ell$, which measures the non-convexity of function $F(\theta)$ and prove that the performance is improved if $\ell$ is much smaller than the smoothness $L$ in the batched and stochastic setting.
\end{abstract}

\section{Introduction}

We study the following problem
\begin{equation}
\min_{\theta \in \Omega} F(\theta), 
\end{equation}
where $F(\theta)$ is non-convex and $L$ smooth, $\Omega$ is a complex constraint. In the stochastic setting, we assume $F(\theta)=E_{\xi} f(\theta,\xi)$, where $f(\theta,\xi)$ is smooth and non-convex, while in the finite-sum case, we have $F(\theta)=\frac{1}{n}\sum_{i=1}^n f_i(\theta).$

In many real setting, the cost of projection on $\Omega$  may be expensive~(for instance, the projection on the trace norm ball) or even computationally intractable \citep{collins2008exponentiated}. To alleviate such difficulty, the Frank-Wolfe method \citep{frank1956algorithm} (a.k.a. Conditional Gradient method ) which was initially developed for the convex problem in 1950s attracts attentions again in machine learning community \citep{jaggi2013revisiting}, due to its projection free property. In each iteration, the algorithm calls first-order oracle to get $\nabla F(\theta)$ and then calls a linear oracle in the form $\arg\min_{\theta\in \Omega}\langle \theta, g \rangle $, which avoids the projection operation. It is well-known that, to achieve $\epsilon$-solution, $\mathcal{O}\big(\frac{1}{\epsilon}\big)$ iterations are required given that $F(\theta)$ is convex and smooth. However this rate is significantly worse than the optimal rate $\mathcal{O} \big( 1/\sqrt{\epsilon}  \big)$ for the smooth convex problem \citep{nesterov2013introductory}, which raises a question whether this complexity bound $\mathcal{O} ( \frac{1}{\epsilon})$ is improvable. Unfortunately, the answer is no in the general setting \citep{lan2013complexity,guzman2015lower} and the improvable result can only be obtained with stronger assumptions, see works in \citep{garber2013linearly,garber2015faster}. In \citep{lan2016conditional}, the author proposes the conditional gradient sliding method which combines the idea of Nesterov's accelerated gradient with Frank-Wolfe method. While the number of calls on linear oracle is same, the number of gradient computations (the first order oracle) is significantly improved from $ \mathcal{O} (\frac{1}{\epsilon}) $ to $\mathcal{O}(\frac{1}{\sqrt{\epsilon}})$. Under the strongly convex assumption, this bound can be further pushed into $  \mathcal{O}( \log(1/\epsilon))$ by using the restarting skills \citep{lan2016conditional}.

Very recently, people have investigated the convergence on non-convex Frank-Wolfe method, which includes the batched, stochastic and finite-sum setting \citep{lacoste2016convergence,reddi2016stochasticfw}. A natural question is that does the same thing happens when we combine the Nestrov's accelerated gradient
 with Frank-Wolfe method in the non-convex setting . Our answer is yes, the non-convex conditional gradient sliding (NCGS) improves the complexity on the first order oracle. \textbf{Our contributions}  are summarized in the table \ref{table:batch},\ref{table:stochastic}, \ref{table:finite-sum} (with red color). To the best of our knowledge, our result outperforms  Frank-Wolfe method in the corresponding setting. For the formal definition of first order oracle (FO), stochastic first order oracle (SFO), Incremental First Order Oracle (IFO) and linear oracle (LO), see section \ref{section.preliminary}.
 
% \begin{table}
% 	\centering
% 	\begin{tabular}{| l |l |l|}
% 		\hline
% 		Algorithm  & FO/IFO/SFO complexity & LO complexity\\ \hline
% 		\color{red}{NCGS (batch)  }      & \color{red}{ $\mathcal{O} (1/\epsilon) $  }    &  \color{red} {$\mathcal{O} (1/\epsilon^2)$}  \\ \hline
% 		FW   (batch)       &   $\mathcal{O} (1/\epsilon^2)$        &  $\mathcal{O} (1/\epsilon^2)$   \\ \hline
% 		\color{red} {NCGS (Stochastic)}        &      \color{red} {$\mathcal{O} (1/\epsilon^2)$}       &   \color{red}{$\mathcal{O} (1/\epsilon^2)$ }  \\ \hline
% 		SAGAFW (Stochastic)       &  $\mathcal{O} (1/\epsilon^{\frac{10}{3}})$         & $\mathcal{O} (1/\epsilon^2)$    \\ \hline
% 		SVFW (Stochastic)        &   $\mathcal{O} (1/\epsilon^{\frac{8}{3}})$        &  $\mathcal{O} (1/\epsilon^2)$   \\  \hline
% 		\color{red} {NCGS-VR (Finite-sum)}        &   \color{red}{$\mathcal{O} (\frac{n^{\frac{2}{3}}}{\epsilon})$ }        &  \color{red} {$\mathcal{O} (1/\epsilon^2)$} \\ \hline
% 		SVFW (Finite-sum)        & $\mathcal{O}(n+\frac{n^{\frac{2}{3}}}{\epsilon^2})$        &  $\mathcal{O} (1/\epsilon^2)$ \\ \hline 		
% 	\end{tabular}	
% 	\caption{Comparison of FO/SFO/IFO complexity of algorithms discussed in this paper. Particularly,  we compare FO in the batched setting, SFO in the stochastic setting and IFO in the finite-sum setting.}\label{table:comparison}
% \end{table}

\begin{table}[h]
		\centering
	\begin{tabular}{| l |l |l|}
		\hline
	
			Algorithm  & FO complexity & LO complexity\\ \hline
			\color{red}{NCGS  }      & \color{red}{ $\mathcal{O} (1/\epsilon) $  }    &  \color{red} {$\mathcal{O} (1/\epsilon^2)$}  \\ \hline
			FW        &   $\mathcal{O} (1/\epsilon^2)$        &  $\mathcal{O} (1/\epsilon^2)$   \\ \hline
	\end{tabular}
	\caption{Comparison of complexity of algorithms in the batched setting.}\label{table:batch}
\end{table}

\begin{table}[h]
	\centering
	\begin{tabular}{| l |l |l|}
		\hline
			Algorithm  & SFO complexity & LO complexity\\ \hline
		\color{red} {NCGS}        &      \color{red} {$\mathcal{O} (1/\epsilon^2)$}       &   \color{red}{$\mathcal{O} (1/\epsilon^2)$ }  \\ \hline
	SAGAFW      &  $\mathcal{O} (1/\epsilon^{\frac{10}{3}})$         & $\mathcal{O} (1/\epsilon^2)$    \\ \hline
	SVFW         &   $\mathcal{O} (1/\epsilon^{\frac{8}{3}})$        &  $\mathcal{O} (1/\epsilon^2)$   \\  \hline
	\end{tabular}
	\caption{Comparison of complexity of algorithms in the stochastic setting.}\label{table:stochastic}	
\end{table}

\begin{table}[h]
	\centering
	\begin{tabular}{| l |l |l|}
		\hline
		Algorithm  & IFO complexity & LO complexity\\ \hline
		\color{red}{NCGS  }      & \color{red}{ $\mathcal{O} (n/\epsilon) $  }    &  \color{red} {$\mathcal{O} (1/\epsilon^2)$}  \\ \hline
		FW          &   $\mathcal{O} (n/\epsilon^2)$        &  $\mathcal{O} (1/\epsilon^2)$   \\ \hline
		\color{red} {NCGS-VR }        &   \color{red}{$\mathcal{O} (\frac{n^{\frac{2}{3}}}{\epsilon})$ }        &  \color{red} {$\mathcal{O} (1/\epsilon^2)$} \\ \hline
	SVFW        & $\mathcal{O}(n+\frac{n^{\frac{2}{3}}}{\epsilon^2})$        &  $\mathcal{O} (1/\epsilon^2)$ \\ \hline 		
	\end{tabular}
	\caption{Comparison of complexity of algorithms in the finite-sum setting. Since we need to evaluate n gradients each iteration in NCGS and FW, the IFO complexity of NCGS and FW are  n$\times$  results in table \ref{table:batch}.}\label{table:finite-sum}	
\end{table}
 
\subsection*{Related work}

The classical Frank-Wolfe method considers the smooth convex function $F(\theta)$ over a polyhedral constraint and enjoys $\mathcal{O} (1/\epsilon)$ convergence rate \citep
{frank1956algorithm,jaggi2013revisiting}. Recent work in \citep{garber2013linearly,garber2015faster} proves faster convergence rate given additional assumption. The conditional gradient sliding proposed in \cite{lan2016conditional} aims at the convex objective function. While our high level idea is same with it, the analysis is totally different due to the non-convexity.

Most non-convex work includes the projection or proximal operation, and we just list some of them below. The authors in \citep{ghadimi2013stochastic} investigate SGD in the non-convex setting. They extend Nesterov's acceleration method in the constrained stochastic optimization \citep{ghadimi2016accelerated}.  The performance on non-convex stochastic variance reduction method is analyzed in \citep{reddi2016stochastic,allen2016variance,shalev2016sdca,allen2016improved}. 

The literature on projection free method in non-convex optimization is very limited. The early work in \citep{bertsekas1999nonlinear} proves the asymptotic convergence of Frank-Wolfe method to the stationary point but without rate. In \citep{lacoste2016convergence}, the author provides the rate $\mathcal{O} (1/\epsilon^2)$ for the Frank-Wolfe method in (batched) non-convex setting with the criteria of Frank-Wolfe gap, i.e.,  both FO and LO complexity are $\mathcal{O} (1/\epsilon^2)$,  while in our NCGS, $FO$ complexity is $\mathcal{O} (1/\epsilon)$ and LO complexity is $ \mathcal{O}(1/\epsilon^2)$ .     Recent work on stochastic Frank-Wolfe method (non-convex) shows that SFO complexity and LO complexity are $\mathcal{O} (1/\epsilon^{\frac{10}{3}})$, $ \mathcal{O} ( 1/\epsilon^2) 
$ respectively in SVFW-S and $  \mathcal{O} (1/\epsilon^{\frac{8}{3}})$, $ \mathcal{O} ( 1/\epsilon^2)$ in SAGAFW-S \citep{reddi2016stochasticfw}. Our SFO and LO on the same setting are $\mathcal{O}(1/\epsilon^2)$ and  $\mathcal{O}(1/\epsilon^2)$ respectively. In the finite sum setting, our variance reduction NCGS(NCGS-VR) has IFO complexity $ \mathcal{O} (\frac{n^{\frac{2}{3}}}{\epsilon}) $,while the state of the art variance reduced FW has complexity $\mathcal{O}(n+\frac{n^{\frac{2}{3}}}{\epsilon^2})$ and the same LO complexity \citep{reddi2016stochasticfw}.  It is clear that no matter in batched, stochastic and finite sum setting, our results outperforms the literature by saving  gradient computations.
\section{Preliminary }\label{section.preliminary}

\subsection*{Oracle model}
\begin{itemize}
	\item First Order Oracle (FO): FO returns $\nabla_{\theta} F(\theta)$.
	\item Stochastic First Order Oracle (SFO): For a function $F(\theta)=\mathbb{E}_{\xi} f(\theta,\xi)$ where $\xi\sim P$, a SFO returns the stochastic gradient $G(\theta_k,\xi_k)=\nabla_{\theta}f(\theta_k,\xi_k)$ where $\xi_k$ is a sample drawn i.i.d. from $P$ in the $k$ th call.
	\item Incremental First Order Oracle (IFO): For the setting $F(\theta)=\frac{1}{n}\sum_{i=1}^{n}f_i(\theta)$, an IFO samples $i\in [n]$ and returns $ \nabla_\theta f_i(\theta).$
	\item Linear oracle (LO): LO solves the following problem $\arg\min_{\theta\in\Omega} \langle \theta, g   \rangle  $ for a given vector $g$.
\end{itemize}

Thought out the paper, the complexity of $FO$, $SFO$, $IFO$, $LO$ denotes the number of call of them to obtain solution with  $\epsilon$ accuracy. 
 
\subsection*{Assumptions}
We say $F(\theta)$ is $L$ smooth, if $\|\nabla F(\theta_1)-\nabla F(\theta_2)\|\leq L \|\theta_1-\theta_2\|$. This definition is equivalent to the following form $-\frac{L}{2}\|\theta_1-\theta_2\|^2\leq F(\theta_1)-F(\theta_2)-\langle F(\theta_2),\theta_1-\theta_2  \rangle\leq \frac{L}{2}\|\theta_1-\theta_2\|^2, \forall \theta_1, \theta_2 \in \Omega$

%We say that $F(\theta)$ is L upper smooth if it satisfies
%$$ F(\theta_1)-F(\theta_2)-\langle F(\theta_2),\theta_1-\theta_2  \rangle\leq \frac{L}{2}\|\theta_1-\theta_2\|^2, \forall  \theta_1,\theta_2 \in \Omega.$$
We say $F(\theta)$ is $\ell$ lower smooth if it satisfies 
$$    -\frac{l}{2}\|\theta_1-\theta_2\|^2\leq F(\theta_1)-F(\theta_2)-\langle F(\theta_2),\theta_1-\theta_2  \rangle, \forall \theta_1,\theta_2 \in \Omega.$$

Easy to see that if we just have L smooth assumption, then $l=L$. However, in some cases, the non-convexity $l$ is much smaller than $L$ and we will show how it affects the result in our theorem.

We then define some prox-mapping type function $\psi(x,\omega,\gamma)$:
$$ \psi(x,\omega,\gamma)=\arg\min_{\theta\in \Omega} \langle \omega,\theta\rangle+\frac{1}{2\gamma}\|\theta-x\|^2.$$
It is closely related to the projected gradient by setting $w=\nabla F(\theta
)$, $\gamma$ by the stepsize and $x=\theta_{k}$.  We assume $\psi(x,\omega,\gamma)\leq M $ for all $\gamma \in (0,\infty)$ and $ x\in \Omega $ and $\omega\in R^p$.

For the stochastic setting, we have following additional assumptions.

For any  $\theta\in \mathbb{R}^p$ and $k>1$, we have
$$ 1. \mathbb{E} G(\theta,\xi_k)=\nabla F(\theta)~~2. \mathbb{E} \|G(\theta,\xi)-\nabla F(\theta)\|^2\leq \sigma^2, $$
which are unbiasedness and bounded variance assumption on $G(\theta,\xi_k)$ respectively.

\subsection*{Convergence criteria}
The conventional convergence criteria in non-convex optimization to find a solution with $\epsilon$ accuracy is $\|\nabla F(\theta)\|^2\leq \epsilon$ \citep{lan2016conditional,nesterov2013introductory}. However when the problem has constraint, it needs a different termination criterion based on the gradient mapping \citep{lan2016conditional}, which is a natural extension of gradient (if there is no constraint, it reduces to the gradient.)

We define the gradient mapping as follows

$$g(\theta,\nabla F(\theta),\gamma)=\frac{1}{\gamma} (\theta-\psi(\theta,\nabla F(\theta),\gamma)).$$

Through out the paper, we use $g(\theta,\nabla F(\theta),\gamma)$ as the convergence criteria, i.e., we want to find the solution $\theta$ such that $\|g(\theta,\nabla F(\theta),\gamma)\|^2\leq \epsilon$. Notice there is another criteria called Frank-Wolfe gap $\max_{x\in \Omega} \langle x-\theta_k, -\nabla F(\theta_k) \rangle $ in the analysis of recent non-convex Frank-Wolfe method \citep{lacoste2016convergence,reddi2016stochastic}, which was initially used in the convex Frank-Wolfe method. While, in this paper, we follow the definition on gradient mapping, since it is a natural generalization of gradient.

\section{Batched Non-convex conditional gradient sliding}
\subsection{Algorithm}

\begin{algorithm}[h]
	\caption{Non-convex conditional gradient sliding (NCGS)}
	\label{alg:non-convex-CGS}
	\begin{algorithmic}
		\STATE {\bfseries Input:}  Step size $\alpha_k$, $\lambda_k$, $\beta_k$, smoothness parameter $L$.
		\STATE Initialization: $\theta_0^{ag}=\theta_0$, k=1.\\
		\FOR{$k=1,...,N$}
		\STATE update:  $\theta_k^{md}=(1-\alpha_k)\theta_{k-1}^{ag}+\alpha_{k}\theta_{k-1}$\\
		\STATE update: $\theta_k=condg (\nabla F(\theta_{k}^{md}),\theta_{k-1}, \lambda_k,\eta_k) $\\
		\STATE update: option I: $\theta_{k}^{ag}=\theta_k^{md}- \beta_k \tilde{g}(\theta_{k-1},\nabla F(\theta_{k}^{md}),\beta_k,\eta_k) $, where $\tilde{g}(\theta_{k-1},\nabla F(\theta_{k}^{md}),\beta_k,\eta_k):=\frac{\theta_{k-1}-\theta_k}{\lambda_k} .$\\
		\qquad \quad option II: $\theta_k^{ag}=condg(\nabla F(\theta_k^{md}),\theta_k^{md},\beta_k, \chi_k).$
		\ENDFOR
	\end{algorithmic}
\end{algorithm}
The procedure of $condg$ is presented below.
\begin{algorithm}[h]
	\begin{algorithmic}
		\caption{Procedure of $ u^{+}=condg (l, u,\lambda,\eta)$}
		\STATE 1.$u_1=u$ and $t=1$.
		\STATE 2.$v_t$ be an optimal solution for the subproblem $$ V(u_t)=\max_{x\in \Omega}\langle l+\frac{1}{\gamma}(u_t-u), u_t-x  \rangle $$
		\STATE 3.if $V(u_t)\leq \eta$, set $u^{+}=u_t$ and terminate the procedure
		\STATE 4.$u_{t+1}=(1-\xi_t)u_t+\xi_tv_t$ with $\xi_t=\min \{ 1, \frac{  \langle \frac{1}{\gamma}(u-u_t)-l,v_t-u_t \rangle }{\frac{1}{\gamma} \|v_t-u_t\|^2}  \} $
		\STATE Set $t\leftarrow t+1$ and go to step 2.
		\STATE \textbf{end procedure} 
	\end{algorithmic}
\end{algorithm}

\subsection{Theoretical result}
\begin{theorem}\label{Theorem.batch}
	Set $\alpha_k=\frac{2}{k+1}$, $\beta_k=\frac{1}{2L}$, $\lambda_k =\beta_k$, $\eta_k=\frac{1}{N}$ in option $I$ in Algorithm \ref{alg:non-convex-CGS}, then we have 
	
	$$\min_{k=1,...,N} \|g(\theta_{k-1},\nabla F(\theta_{k}^{md}),\lambda_k)\|^2\leq \frac{12L(F(\theta_0)-F(\theta^*))+16L}{N}. $$  In option II,  we set $\alpha_k=\frac{2}{k+1}$, $\beta_k=\frac{1}{2L}$,$\lambda_k=k\beta_k/2$,  $\eta_k=\frac{1}{N}$,$\chi_k=\frac{1}{N}$, then we have
	
	$$ \min_{k=1,...,N} \|g(\theta_{k-1},\nabla F(\theta_{k}^{md}),\beta_k)\|^2\leq  48L \big(  \frac{4L \|\theta_0-\theta^*\|^2}{N^2(N+1)}+\frac{l}{N} (\|\theta^*\|^2+2M^2)+\frac{2}{N}  \big).$$ 
	
\end{theorem}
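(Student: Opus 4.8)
The plan is to prove both options through a common two-stage strategy: first convert the \emph{computable} approximate prox-point produced by the inner \texttt{condg} routine into the \emph{exact} gradient mapping that appears in the convergence criterion, and then run a smoothness-based descent on the outer accelerated iteration. The bridge lemma is the starting point. The termination test $V(\theta_k)\le\eta_k$ is exactly a Frank--Wolfe gap bound for the subproblem $\min_{\theta\in\Omega}\phi(\theta)$ with $\phi(\theta)=\langle\nabla F(\theta_k^{md}),\theta\rangle+\tfrac{1}{2\lambda_k}\|\theta-\theta_{k-1}\|^2$. Since $\phi$ is $\tfrac{1}{\lambda_k}$-strongly convex, this single bound yields two facts at once: (i) approximate optimality $\langle\nabla F(\theta_k^{md})+\tfrac{1}{\lambda_k}(\theta_k-\theta_{k-1}),\theta_k-x\rangle\le\eta_k$ for all $x\in\Omega$, and (ii) the distance estimate $\|\theta_k-\psi\|^2\le 2\lambda_k\eta_k$, where $\psi=\psi(\theta_{k-1},\nabla F(\theta_k^{md}),\lambda_k)$. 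Writing $\tilde g=\tfrac{1}{\lambda_k}(\theta_{k-1}-\theta_k)$ and $g=\tfrac{1}{\lambda_k}(\theta_{k-1}-\psi)$, fact (ii) gives $\|g\|^2\le 2\|\tilde g\|^2+4\eta_k/\lambda_k$, which is the identity that transfers any bound on $\sum_k\|\tilde g\|^2$ into the stated bound on $\min_k\|g\|^2$.

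For Option I, I would next start from $L$-smoothness applied between $\theta_k^{ag}$ and $\theta_k^{md}$, substitute $\theta_k^{ag}-\theta_k^{md}=-\beta_k\tilde g$, and invoke (i) with $x=\theta_{k-1}$ to get the lower bound $\langle\nabla F(\theta_k^{md}),\tilde g\rangle\ge\|\tilde g\|^2-\eta_k/\lambda_k$. With $\beta_k=\lambda_k=\tfrac{1}{2L}$ the smoothness term contributes a factor $1-\tfrac{L\beta_k}{2}=\tfrac34$, collapsing everything to the clean per-step inequality
\[
F(\theta_k^{ag})\le F(\theta_k^{md})-\tfrac{3\beta_k}{4}\|\tilde g\|^2+\eta_k .
\]
The crux is then to relate $F(\theta_k^{md})$ back to the previous accumulation point. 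Using $\theta_k^{md}=(1-\alpha_k)\theta_{k-1}^{ag}+\alpha_k\theta_{k-1}$ and a second application of smoothness, the momentum displacement $\theta_k^{md}-\theta_{k-1}^{ag}=\alpha_k(\theta_{k-1}-\theta_{k-1}^{ag})$ must be controlled so that the per-step inequalities telescope against $F(\theta_0)-F(\theta^*)$. Summing, choosing $\eta_k=1/N$ so that $\sum_k\eta_k=1$, and applying the bridge identity then yields the Option I bound. \textbf{This momentum control is the main obstacle}: because $F$ is non-convex we cannot simply assert $F(\theta_k^{md})\le F(\theta_{k-1}^{ag})$, and the displacement $\theta_{k-1}-\theta_{k-1}^{ag}$ has no a priori bound, so it has to be reabsorbed into the telescoped gradient-mapping terms, which is precisely what inflates the leading constant to $12L$.

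For Option II the analysis is the genuinely accelerated one. Here $\theta_k^{ag}$ is itself an approximate prox-point (centered at $\theta_k^{md}$, step $\beta_k$, tolerance $\chi_k$), and $\lambda_k=k\beta_k/2$ grows. I would introduce accelerated weights $\Gamma_k$ consistent with $\alpha_k=\tfrac{2}{k+1}$ and combine the smoothness descent for $\theta_k^{ag}$ with the convex-combination identity for $\theta_k^{md}$, but now replacing every place where convexity of $F$ would be used by the $\ell$-lower-smoothness inequality $F(\theta_1)\ge F(\theta_2)+\langle\nabla F(\theta_2),\theta_1-\theta_2\rangle-\tfrac{\ell}{2}\|\theta_1-\theta_2\|^2$. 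A three-point prox lemma applied at the $\theta_k$-update (in its approximate, $\eta_k$-perturbed form) turns the weighted term $\alpha_k\langle\nabla F(\theta_k^{md}),\theta_{k-1}-\theta^*\rangle$ into telescoping squared distances $\|\theta_{k-1}-\theta^*\|^2-\|\theta_k-\theta^*\|^2$, which after division by $\sum_k\Gamma_k\sim N^2(N+1)$ produces the fast $\tfrac{4L\|\theta_0-\theta^*\|^2}{N^2(N+1)}$ term. The residual contributions of the $\ell$-terms, together with the uniform bound $\|\psi\|\le M$ and the two tolerances $\eta_k=\chi_k=1/N$, aggregate into the $\tfrac{\ell}{N}(\|\theta^*\|^2+2M^2)+\tfrac{2}{N}$ remainder that in fact dominates the rate.

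Two points deserve care in Option II. First, there is a step-size mismatch: the criterion reads the gradient mapping at step $\beta_k$ while the computed iterate $\theta_k$ uses step $\lambda_k=k\beta_k/2$, so the bridge lemma must be supplemented by the monotonicity of $\|g(\cdot,\cdot,\gamma)\|$ in its step-size argument to compare the two mappings. Second, and this is \textbf{the principal obstacle for Option II}, the non-convexity error and the two independent \texttt{condg} approximation errors must be pushed through the weighted telescoping so that they collect \emph{exactly} into the $O(1/N)$ tail rather than contaminating the accelerated $O(1/N^2)$--$O(1/N^3)$ leading distance term; getting the weights $\Gamma_k$ and the partial sums of $\lambda_k$ to balance is the delicate bookkeeping step. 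Once that is done, plugging in the prescribed stepsizes and applying the bridge identity one last time gives the claimed inequality.
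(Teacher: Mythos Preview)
Your outline is broadly on target, but two of the obstacles you flag are not obstacles at all, and missing this leads you away from the paper's actual mechanism.

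\textbf{Option I.} With the theorem's choice $\lambda_k=\beta_k$, the recursion
\[
\theta_k^{ag}-\theta_k=(1-\alpha_k)(\theta_{k-1}^{ag}-\theta_{k-1})+(\lambda_k-\beta_k)\,\tilde g_k
\]
together with $\theta_0^{ag}=\theta_0$ forces $\theta_k^{ag}=\theta_k$ for every $k$, and hence $\theta_k^{md}=(1-\alpha_k)\theta_{k-1}^{ag}+\alpha_k\theta_{k-1}=\theta_{k-1}$. So the ``momentum displacement'' $\theta_{k-1}-\theta_{k-1}^{ag}$ that you identify as the main obstacle is identically zero; there is nothing to reabsorb. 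Your descent inequality $F(\theta_k^{ag})\le F(\theta_k^{md})-\tfrac{3\beta_k}{4}\|\tilde g\|^2+\eta_k$ then reads $F(\theta_k)\le F(\theta_{k-1})-\tfrac{3}{8L}\|\tilde g\|^2+\eta_k$ and telescopes immediately. The paper's proof is written for the wider range $\lambda_k\in[\beta_k,(1+\alpha_k/4)\beta_k]$, where the displacement is genuinely nonzero; there it tracks $F(\theta_k)$ (not $F(\theta_k^{ag})$), bounds $\|\nabla F(\theta_{k-1})-\nabla F(\theta_k^{md})\|\le L(1-\alpha_k)\|\theta_{k-1}^{ag}-\theta_{k-1}\|$, and controls $\|\theta_{k-1}^{ag}-\theta_{k-1}\|^2$ via the recursion above expressed through the $\Gamma_k$ weights. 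That is a different mechanism from the one you sketch (relating $F(\theta_k^{md})$ to $F(\theta_{k-1}^{ag})$ by a second smoothness step), and your mechanism would indeed struggle with the first-order cross term $\alpha_k\langle\nabla F(\theta_{k-1}^{ag}),\theta_{k-1}-\theta_{k-1}^{ag}\rangle$ in the general case.

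\textbf{Option II.} There is no step-size mismatch to repair. In Option II the iterate $\theta_k^{ag}=\mathrm{condg}(\nabla F(\theta_k^{md}),\theta_k^{md},\beta_k,\chi_k)$ is itself an approximate prox-point at step $\beta_k$ centred at $\theta_k^{md}$, so the quantity the paper bounds, $\|\theta_k^{ag}-\theta_k^{md}\|^2=\beta_k^2\|\tilde g(\cdot,\cdot,\beta_k,\chi_k)\|^2$, is already the $\beta_k$-gradient mapping up to the bridge lemma with tolerance $\chi_k$. The $\theta_k$ update at step $\lambda_k=k\beta_k/2$ is used \emph{only} to generate the telescoping distance terms $\|\theta_{k-1}-\theta^*\|^2-\|\theta_k-\theta^*\|^2$ in the accelerated estimate; it never enters the gradient-mapping conversion, and no monotonicity-in-$\gamma$ argument is needed. (The paper's first argument in $g(\theta_{k-1},\nabla F(\theta_k^{md}),\beta_k)$ is a notational slip; what is actually controlled is the mapping centred at $\theta_k^{md}$.) Apart from this, your plan---three-point prox inequality from the $\theta_k$ update, the analogous inequality from the $\theta_k^{ag}$ update evaluated at $\alpha_k\theta_k+(1-\alpha_k)\theta_{k-1}^{ag}$, $\ell$-lower-smoothness replacing convexity, and $\Gamma_k$-weighted telescoping---matches the paper's argument.
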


Remarks:  The FO complexities of option I and II are same ,i.e., $\mathcal{O} (1/\epsilon)$.  However, when the non-convexity $l$ is small, option II has better convergence rate. In the high level,  $\frac{L^2 \|\theta_0-\theta^*\|^2}{N^2(N+1)}$ corresponds to the convex part of the function,  $\frac{Ll}{N} (\|\theta^*\|^2+2M^2)$ corresponds to the non-convex part of the function, while the last term $L/N$ corresponds to the procedure of condg.

Using above theorem, we have following corollary on FO and LO complexity.

\begin{corollary}\label{Corollary.batch}
	Under the same condition of theorem \ref{Theorem.batch}. In option I and II of algorithm \ref{alg:non-convex-CGS}, to achieve the accuracy $\epsilon$, the FO complexity is $ \mathcal{O}(1/\epsilon) $ and the LO complexity is $ \mathcal{O}(\frac{1}{\epsilon^2}) $.	
\end{corollary}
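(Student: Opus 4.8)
\textbf{Proof proposal for Corollary \ref{Corollary.batch}.}

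The plan is to convert the rate bounds of Theorem \ref{Theorem.batch} into oracle counts by reading off how large $N$ must be to drive the right-hand side below $\epsilon$, and then accounting for how many FO and LO calls are spent over those $N$ outer iterations. For the FO complexity this is immediate: each outer iteration $k=1,\dots,N$ invokes the first-order oracle exactly once, to evaluate $\nabla F(\theta_k^{md})$ for the $condg$ subroutine, so the total FO count equals $N$. In option I, Theorem \ref{Theorem.batch} gives a bound of the form $\min_k \|g\|^2 \le \frac{12L(F(\theta_0)-F(\theta^*))+16L}{N}$, which is $O(1/N)$; setting this equal to $\epsilon$ forces $N = O(1/\epsilon)$, hence the FO complexity is $O(1/\epsilon)$. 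For option II the bound is a sum of three terms of orders $O(1/N^3)$, $O(1/N)$, and $O(1/N)$; the dominant terms are $O(1/N)$, so again $N=O(1/\epsilon)$ and the FO complexity is $O(1/\epsilon)$. This establishes the FO claim for both options.

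For the LO complexity I would first bound the number of linear-oracle calls made inside a single invocation of $condg$. The key input is the known inner-loop guarantee for the conditional-gradient (Frank-Wolfe) procedure: when $condg(l,u,\lambda,\eta)$ is run, the Frank-Wolfe gap $V(u_t)$ decreases at the standard $O(1/t)$ rate, so the procedure terminates once $V(u_t)\le\eta$ after $O\!\left(\frac{1}{\lambda\eta}\right)$ inner iterations, each of which costs exactly one LO call. Since the paper sets $\eta_k=\tfrac{1}{N}$ (and $\chi_k=\tfrac1N$ for option II's second $condg$ call), with $\lambda_k$ of order $\Theta(1/L)$ in option I (and $\lambda_k=k\beta_k/2=\Theta(k/L)$ in option II), each $condg$ call uses $O(N)$ LO iterations in the worst case. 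Multiplying the per-iteration LO cost $O(N)$ by the $N$ outer iterations gives a total LO count of $O(N^2)$.

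Substituting the requirement $N=O(1/\epsilon)$ from the FO analysis then yields an LO complexity of $O(N^2)=O(1/\epsilon^2)$, as claimed, for both options. The step I expect to be the main obstacle is making the inner-loop LO bound fully rigorous and uniform across iterations: one must verify that the $O(1/(\lambda\eta))$ termination guarantee for the $condg$ procedure holds with constants that do not degrade with $k$, using the boundedness assumption $\psi\le M$ and the bound on the diameter of $\Omega$ to control the Frank-Wolfe gap, and one must check that option II's choice $\lambda_k=k\beta_k/2$ (which grows with $k$) still yields a per-call LO count that, summed over $k=1,\dots,N$, stays $O(N^2)$ rather than blowing up. Once the per-call LO bound is pinned down carefully, the multiplication by $N$ and the substitution $N=O(1/\epsilon)$ are routine.
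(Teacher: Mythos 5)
Your proposal is correct and follows essentially the same route as the paper: count one FO call per outer iteration so that $N=\mathcal{O}(1/\epsilon)$ from Theorem~\ref{Theorem.batch}, then bound each $condg$ invocation by the standard Frank--Wolfe inner-loop guarantee of $\frac{1}{2\lambda}/\eta$ LO calls (which with $\eta_k=\chi_k=1/N$ gives $NL$, resp.\ $\lceil 2NL/k\rceil$ and $NL$ in option II), yielding $\mathcal{O}(N^2)=\mathcal{O}(1/\epsilon^2)$ total LO calls. The only difference is that you flag the diameter-dependence and the $k$-growing $\lambda_k$ as points needing care, which the paper glosses over but which resolve exactly as you anticipate.
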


\section{Stochastic non-convex conditional gradient sliding}

In this section we consider the following problem 
\begin{equation}
\min_{\theta \in \Omega} F(\theta):=E_{\xi} f(\theta,\xi). 
\end{equation}

\subsection{Algorithm}

The stochastic CGS method is obtained by replacing the exact gradient $\nabla F(\theta)$ in Algorithm \ref{alg:non-convex-CGS} by the stochastic gradient $G(\theta,\xi)$, incorporating a mini-batched approach, and randomized termination criterion for the non-convex stochastic optimization in \citep{ghadimi2016accelerated}. In particular, we define $\bar{G}_k=\frac{1}{m_k}\sum_{i=1}^{m_k} G(x_k^{md},\xi_{k,i}).$

Notice, using our assumption in section \ref{section.preliminary}, we have

$$ \mathbb{E} \bar{G}_k=\frac{1}{m_k}\sum_{i=1}^{m_k} \mathbb{E} G(\theta_k^{md},\xi_{k,i})=\nabla F(\theta_k^{md}),$$
and 
\begin{equation}\label{equ.average_variance}
\mathbb{E} \|\bar{G}_k-\nabla F(\theta_k^{md})\|^2\leq \frac{\sigma^2}{m_k}.
\end{equation}

\begin{algorithm}[h]
	\caption{Stochastic Non-convex conditional gradient sliding}
	\label{alg:non-convex-SCGS}
	\begin{algorithmic}
		\STATE {\bfseries Input:}  Step size $\alpha_k$, $\lambda_k$, $\beta_k$, smoothness parameter $L$, a probability mass function $P_R(\cdot)$ with $Prob\{R=k \}=p_k, k=1,...,N$. \STATE Initialization: $\theta_0^{ag}=\theta_0$, k=1.\\
		\STATE Let $R$ be a random variable 
		\FOR{$k=1,...,R$}
		\STATE update:  $\theta_k^{md}=(1-\alpha_k)\theta_{k-1}^{ag}+\alpha_{k}\theta_{k-1}$\\
		\STATE update: $\theta_k=condg ( \bar{G}_k,\theta_{k-1},\lambda_k,\eta_k) $\\
		\STATE update:
		$\theta_k^{ag}=condg( \bar{G}_k,\theta_k^{md},\beta_k, \chi_k)$
		\ENDFOR
	\end{algorithmic}
\end{algorithm}

\subsection{Theoretical Result}
\begin{theorem}\label{Theorem.stochastic}
	Set $\alpha_k=\frac{2}{k+1}$, $\beta_k=\frac{1}{2L}$, $\lambda_k=\frac{k\beta_k}{2}$, $\eta_k=\chi_k=\frac{1}{N}$ and $m_k=k$ , and set $p_k=\frac{\Gamma_k^{-1}}{\sum_{k=1}^{N}\Gamma_k^{-1}}$, where $\Gamma_k=\frac{2}{k(k+1)}$  in Algorithm \ref{alg:non-convex-SCGS}, then we have

	$$ E [ \|g(\theta_{R}^{md},\bar{G}_R,\beta_R)\|^2]\leq 192L \big(  \frac{4L \|\theta_0-\theta^*\|^2}{N^2(N+1)}+\frac{l}{N} (\|\theta^*\|^2+2M^2)+\frac{1}{N}+\frac{3\sigma^2}{2LN}   \big). $$
\end{theorem}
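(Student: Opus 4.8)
The plan is to adapt the analysis behind the batched option II bound in Theorem \ref{Theorem.batch} to the stochastic setting, tracking carefully how the variance of $\bar G_k$ and the randomized stopping index $R$ enter the final estimate. I would first record the two inexactness guarantees produced by the $condg$ calls. Writing the approximate gradient mapping as $\tilde g_k := \frac{1}{\beta_k}(\theta_k^{md}-\theta_k^{ag})$ and using that $\theta_k^{ag}=condg(\bar G_k,\theta_k^{md},\beta_k,\chi_k)$ satisfies the Frank--Wolfe gap criterion of the inner procedure, one gets $\langle \bar G_k-\tilde g_k,\theta_k^{ag}-x\rangle\le \chi_k$ for all $x\in\Omega$; choosing $x=\theta_k^{md}$ yields $\langle \bar G_k,\tilde g_k\rangle\ge \|\tilde g_k\|^2-\chi_k/\beta_k$. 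An analogous three-point inequality for $\theta_k=condg(\bar G_k,\theta_{k-1},\lambda_k,\eta_k)$, combined with the $\frac{1}{\lambda_k}$-strong convexity of the prox objective, gives for any $x$ the bound $\langle \bar G_k,\theta_k-x\rangle+\frac{1}{2\lambda_k}\|\theta_k-\theta_{k-1}\|^2\le \frac{1}{2\lambda_k}\|x-\theta_{k-1}\|^2-\frac{1}{2\lambda_k}\|x-\theta_k\|^2+\eta_k$, which is the source of the accelerated telescoping.

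Next I would establish the one-step descent. By $L$-smoothness applied to $\theta_k^{ag}-\theta_k^{md}=-\beta_k\tilde g_k$, and substituting the bound on $\langle\bar G_k,\tilde g_k\rangle$ together with $\beta_k=\frac{1}{2L}$, I expect $F(\theta_k^{ag})\le F(\theta_k^{md})-\frac{3\beta_k}{4}\|\tilde g_k\|^2+\chi_k-\beta_k\langle \nabla F(\theta_k^{md})-\bar G_k,\tilde g_k\rangle$. To weave in the momentum variable I would use the identity $\theta_k^{md}=(1-\alpha_k)\theta_{k-1}^{ag}+\alpha_k\theta_{k-1}$ together with the prox three-point inequality above evaluated at $x=\theta^*$; since convexity is unavailable, the step lower bounding $F(\theta^*)$ against the linearization at $\theta_k^{md}$ must instead invoke the $\ell$-lower-smoothness, and it is exactly the residual $\frac{\ell}{2}\|\cdot\|^2$ there, with the iterates controlled by $\psi\le M$, that produces the non-convex term $\frac{\ell}{N}(\|\theta^*\|^2+2M^2)$.

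I would then multiply the combined per-step inequality by $\Gamma_k^{-1}=\tfrac{k(k+1)}{2}$ and sum over $k$. Because $\Gamma_k=(1-\alpha_k)\Gamma_{k-1}$ with $\alpha_k=\frac{2}{k+1}$, the $F(\theta^{ag})$ terms telescope and the prox three-point inequality collapses the distance terms into $\frac{1}{\lambda_k}\|\theta_0-\theta^*\|^2$-type contributions, producing the convex term $\frac{4L\|\theta_0-\theta^*\|^2}{N^2(N+1)}$ after dividing by $\sum_k\Gamma_k^{-1}=\Theta(N^3)$, while the choices $\chi_k,\eta_k=\frac1N$ give the $\frac1N$ term. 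Taking total expectation, the linear error terms whose second factor is $\mathcal F_{k-1}$-measurable (e.g. those involving $\theta_{k-1}-\theta^*$) vanish by $\mathbb E[\bar G_k\mid\mathcal F_{k-1}]=\nabla F(\theta_k^{md})$, whereas the quadratic error terms are controlled by $\mathbb E\|\bar G_k-\nabla F(\theta_k^{md})\|^2\le\sigma^2/m_k=\sigma^2/k$ from \eqref{equ.average_variance}; with weights $\Gamma_k^{-1}$ this sums to $\sum_k\Gamma_k^{-1}\beta_k\sigma^2/k=\Theta(\sigma^2N^2/L)$, which after normalization yields exactly the variance term $\frac{3\sigma^2}{2LN}$. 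Finally the choice $p_k=\Gamma_k^{-1}/\sum_j\Gamma_j^{-1}$ turns the weighted sum into $\mathbb E[\beta_R\|\tilde g_R\|^2]$, and a standard strong-convexity estimate bounding $\|\tilde g_k - g(\theta_k^{md},\bar G_k,\beta_k)\|^2$ by $O(\chi_k/\beta_k)$ converts the approximate mapping into the true gradient mapping, completing the bound.

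The main obstacle is the cross term $-\beta_k\langle\nabla F(\theta_k^{md})-\bar G_k,\tilde g_k\rangle$: because $\tilde g_k$, through $\theta_k^{ag}$, depends on the very sample $\bar G_k$, one cannot annihilate it by unbiasedness directly. I would split it with Young's inequality as $\le \frac{\beta_k}{4}\|\tilde g_k\|^2+\beta_k\|\nabla F(\theta_k^{md})-\bar G_k\|^2$, absorbing the $\|\tilde g_k\|^2$ piece into the negative $-\frac{3\beta_k}{4}\|\tilde g_k\|^2$ term and handing the residual to the variance bound. Tracking the resulting constants through the weighted sum, and likewise handling the analogous sample-dependent cross terms arising in the acceleration part while keeping the $\ell$ and $\sigma^2$ dependencies sharp, is the delicate bookkeeping that produces the stated constant $192L$ and the clean separation of the four terms.
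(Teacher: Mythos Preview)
Your high-level plan is very close to the paper's: the telescoping via $\Gamma_k$, the use of $\ell$-lower-smoothness to generate the non-convex term, the split of the noise inner products into an $\mathcal F_{k-1}$-measurable part (killed by unbiasedness) and a sample-dependent part (absorbed by Young), the variance bookkeeping with $m_k=k$, and the final passage from $\tilde g$ to $g$ via Lemma~\ref{lemma.bound_gradient_mapping} are all exactly what the paper does.

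There is, however, a genuine gap in the way you couple the two $condg$ updates. You exploit the $\theta_k^{ag}$ termination condition only at $x=\theta_k^{md}$, which gives the descent
\[
F(\theta_k^{ag})\le F(\theta_k^{md})-\tfrac{3\beta_k}{4}\|\tilde g_k\|^2+\chi_k-\beta_k\langle \nabla F(\theta_k^{md})-\bar G_k,\tilde g_k\rangle .
\]
If you now invoke $\ell$-lower-smoothness and the identity $\theta_k^{md}=(1-\alpha_k)\theta_{k-1}^{ag}+\alpha_k\theta_{k-1}$, the residual linear term is $\alpha_k\langle \nabla F(\theta_k^{md}),\theta_{k-1}-\theta^*\rangle$. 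The $\theta_k$ prox three-point inequality at $x=\theta^*$ only controls $\langle \bar G_k,\theta_k-\theta^*\rangle$, so you are left with $\alpha_k\langle \bar G_k,\theta_{k-1}-\theta_k\rangle$. The termination condition gives a \emph{lower} bound on this quantity, not an upper bound, and there is no telescoping structure to absorb it; the argument stalls here.

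The paper avoids this by using the \emph{full} strong-convexity three-point inequality for $\theta_k^{ag}$ (equation~\eqref{equ.SCGS.mid2}) at the specific point $x=\alpha_k\theta_k+(1-\alpha_k)\theta_{k-1}^{ag}$, which produces $\tfrac{\alpha_k^2}{2\beta_k}\|\theta_k-\theta_{k-1}\|^2$ on the right. Adding $\alpha_k$ times the $\theta_k$ three-point inequality then cancels this against $-\tfrac{\alpha_k}{2\lambda_k}\|\theta_k-\theta_{k-1}\|^2$ precisely under the stepsize condition $\lambda_k\alpha_k\le\beta_k$ (which your parameters satisfy since $\lambda_k=\tfrac{k\beta_k}{2}$, $\alpha_k=\tfrac{2}{k+1}$). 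This coupling delivers directly
\[
\langle \bar G_k,\theta_k^{ag}-\alpha_k\theta-(1-\alpha_k)\theta_{k-1}^{ag}\rangle
\le \tfrac{\alpha_k}{2\lambda_k}\big(\|\theta_{k-1}-\theta\|^2-\|\theta_k-\theta\|^2\big)-\tfrac{1}{2\beta_k}\|\theta_k^{ag}-\theta_k^{md}\|^2+\alpha_k\eta_k+\chi_k,
\]
so that after combining with smoothness and lower-smoothness the only noise term is $\langle \bar\delta_k,\alpha_k(\theta^*-\theta_{k-1})+(\theta_k^{md}-\theta_k^{ag})\rangle$, which splits cleanly exactly as you described. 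Replace your ``$x=\theta_k^{md}$'' step by this coupling and the rest of your outline goes through to yield the stated bound.
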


Compare this result with its batched counterpart, i.e., theorem \ref{Theorem.batch}, we see there is a additional term $\frac{\sigma^2}{N}$ in the upper bound  corresponding to the variance of the gradient.

Using this theorem, we obtain the LO and SFO complexity in the following corollary.

\begin{corollary}\label{Corollary.stochastic}
	Under the same setting of Theorem \ref{Theorem.stochastic},  SFO and LO complexities in algorithm \ref{alg:non-convex-SCGS} are $\mathcal{O} (1/\epsilon^2)$ and $\mathcal{O} (1/\epsilon^2) $ respectively.
\end{corollary}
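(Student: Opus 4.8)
The plan is to read off from Theorem~\ref{Theorem.stochastic} how many outer iterations $N$ are needed to push the expected squared gradient mapping below $\epsilon$, and then to count separately the stochastic gradient evaluations and the linear oracle calls that this iteration budget incurs. First I would inspect the right-hand side of the bound in Theorem~\ref{Theorem.stochastic}: it is the sum of one term of order $1/N^3$ (the accelerated/convex part $\frac{4L\|\theta_0-\theta^*\|^2}{N^2(N+1)}$) and three terms of order $1/N$ (the non-convexity term $\frac{l}{N}(\|\theta^*\|^2+2M^2)$, the $condg$-accuracy term $\frac{1}{N}$, and the variance term $\frac{3\sigma^2}{2LN}$). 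Hence $E[\|g(\theta_R^{md},\bar G_R,\beta_R)\|^2]=\mathcal{O}(1/N)$, and choosing $N=\Theta(1/\epsilon)$ guarantees the bound is at most $\epsilon$. This fixes the iteration count that feeds the two complexity counts.

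For the SFO complexity I would use that at outer iteration $k$ the algorithm forms $\bar G_k$ from $m_k=k$ i.i.d.\ samples, so over the at most $N$ iterations the method runs it issues $\sum_{k=1}^{N} m_k=\sum_{k=1}^{N} k=\frac{N(N+1)}{2}=\mathcal{O}(N^2)$ stochastic oracle calls. Substituting $N=\mathcal{O}(1/\epsilon)$ yields the claimed $\mathcal{O}(1/\epsilon^2)$ SFO complexity.

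The substantive part is the LO count, which lives inside the $condg$ procedure. Each outer iteration calls $condg$ twice, with $(\gamma,\eta)=(\lambda_k,\eta_k)$ and $(\gamma,\eta)=(\beta_k,\chi_k)$. I would observe that $condg$ is exactly Frank--Wolfe applied to the $\frac{1}{\gamma}$-smooth quadratic $\langle l,\cdot\rangle+\frac{1}{2\gamma}\|\cdot-u\|^2$, whose Frank--Wolfe gap $V(u_t)$ is the quantity tested in step~3; invoking the standard $\mathcal{O}(\frac{L_\phi D^2}{t})$ decay of this gap (with smoothness $L_\phi=1/\gamma$ and $D$ the diameter of $\Omega$, finite since the prox-mappings are bounded by $M$), the test $V(u_t)\le\eta$ is met after $\mathcal{O}(\frac{D^2}{\gamma\eta})$ linear oracle calls. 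With $\lambda_k=\frac{k}{4L}$, $\beta_k=\frac{1}{2L}$, and $\eta_k=\chi_k=\frac{1}{N}$ this gives $\mathcal{O}(\frac{LD^2N}{k})$ calls for the first $condg$ and $\mathcal{O}(LD^2N)$ for the second. Summing over $k=1,\dots,N$ produces $\mathcal{O}(LD^2 N\log N)$ from the first and $\mathcal{O}(LD^2 N^2)$ from the second, so the total is $\mathcal{O}(N^2)=\mathcal{O}(1/\epsilon^2)$, treating $L$ and $D$ as constants.

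The main obstacle I expect is the per-call inner bound: one must verify that the Frank--Wolfe gap $V(u_t)$ genuinely contracts at the $1/t$ rate and that the line-search stepsize $\xi_t$ does not stall, so that $condg$ provably terminates within $\mathcal{O}(\frac{D^2}{\gamma\eta})$ oracle calls uniformly in $l$ and in the current point $u$. A secondary care point is the randomized stopping index $R\le N$: since both the SFO and LO counts are nondecreasing in the number of executed iterations and $R\le N$ holds deterministically, bounding everything against the full $N$-iteration budget suffices, so no expectation over $R$ is needed for the complexity claim.
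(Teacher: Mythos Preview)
Your proposal is correct and follows essentially the same route as the paper: read off $N=\Theta(1/\epsilon)$ from the $\mathcal{O}(1/N)$ bound in Theorem~\ref{Theorem.stochastic}, sum the mini-batch sizes $\sum_{k=1}^N m_k=\mathcal{O}(N^2)$ for the SFO count, and bound each $condg$ call by $\mathcal{O}(1/(\gamma\eta))$ Frank--Wolfe steps to obtain $\mathcal{O}(N/k)$ and $\mathcal{O}(N)$ inner iterations for the two calls, summing to $\mathcal{O}(N^2)$ LO calls. Your extra care about the $condg$ termination guarantee and the random stopping index $R\le N$ is well placed; the paper glosses over both points but your treatment of them is sound.
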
 

Notice in algorithm \ref{alg:non-convex-SCGS}, we use the mini-batch to calculate $\bar{G}_k$.Thus even the total iteration of the stochastic non-convex conditional gradient sliding is same with the batched one, it needs more calls of SFO.

\section{Finite-Sum nonconvex conditional gradient sliding}
\subsection{Algorithm}
In this section we consider minimizing finite sum problem
\begin{equation}\label{finite_sum}
\min_{x \in \mathbb{R}^d} F(x)=\frac{1}{n}\sum_{i=1}^n f_i(x),
\end{equation}
where each $f_i$ is possibly nonconvex, but  smooth with parameter $L$. If we view the finite-sum problem as a special case of batch
problem, then use our algorithm \ref{alg:non-convex-CGS}, we have IFO complexity $\mathcal{O}(\frac{n}{\epsilon})$. Variance reduction technique has been proposed for finite sum problem to reduce dependence of IFO
complexity on number of component $n$. We incorporate a popular technique, namely SVRG, into our previous algorithm. We will show our new algorithm achieve IFO complexity $\mathcal{O}(\frac{n^{\frac{2}{3}}}{\epsilon})$, 
and with proper specification of parameter, the algorithm can in fact achieve IFO complexity of $\mathcal{O}(\min\{\frac{1}{\epsilon^2},\frac{n^{\frac{2}{3}}}{\epsilon}\})$. To the best of our knowledge, our algorithm outperforms any Frank-Wolfe type algorithm for the non-convex finite-sum problem.

\begin{algorithm}[h]
	\caption{Variance reduction Non-convex conditional gradient sliding (NCGS-VR)}
	\label{alg:non-convex-finite-sum-SCGS}
	\begin{algorithmic}
		\STATE {\bfseries Input:}  $\tilde{\theta}_0=\theta_m^0=\theta_0 \in \mathbb{R}^d$, epoch length $m$, stepsize $\lambda$, proximal tolerance $\eta$, minibatch size $b$, iteration limit $T$, $S=\frac{T}{m}$.
		\FOR{$s=0,...,S-1$}
		\STATE $\theta_0^{s+1}=\theta_m^s$\\
		\STATE $g^{s+1}=\frac{1}{n} \sum_{i=1}^n \nabla f_i(\tilde{\theta}^s)$\\
		\FOR{$t=0,\ldots,m-1$}
		\STATE Pick $I_t$ uniformly from $\{1,\ldots,n\}$ with replacement such that $|I_t|=b$\\
		\STATE $v_t^{s+1}=\frac{1}{b}\sum_{i \in I_t}(\nabla f_{i_t}(\theta_t^{s+1})-\nabla f_{i_t}(\tilde{\theta}^s))+g^{s+1}$\\
		\STATE $\theta_{t+1}^{s+1}=cndg(v_t^{s+1},\theta_t^{s+1},\lambda_t,\eta_t)$\\
		\ENDFOR\\
		\STATE $\tilde{\theta}^{s+1}=\theta_{m}^{s+1}$
		\ENDFOR\\
		\STATE {\bfseries Output:} Iterate $\theta_{\alpha}$ chosen uniformly at random from $\{\{\theta_t^{s+1}\}_{t=0}^{m-1}\}_{s=0}^{S-1}$.
	\end{algorithmic}
\end{algorithm}

\subsection{Theoretical result}
\begin{theorem}\label{finite_sum_NCGS}
	Let $b=n^{\frac{2}{3}}$ in algorithm \ref{alg:non-convex-finite-sum-SCGS}, let $\lambda_t=\frac{1}{3L}$, $m=n^{\frac{1}{3}}$ and $T$ is a multiple of m, let $\eta=\frac{1}{T}$. Then for output $\theta_a$ we have:
	$$ \mathbb{E}[\| g(\theta_{\alpha},\nabla F(\theta_{\alpha}),\lambda)\|^2] \leqslant \frac{18L(f(\theta_0)-f(\theta^{\star})+1)}{T}$$
	where $\theta^{\star}$ is an optimal solution to (\ref{finite_sum}).
\end{theorem}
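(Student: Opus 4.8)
The plan is to adapt the nonconvex SVRG analysis (the variance-reduction technique the algorithm borrows) to the sliding setting, where each inner step is an $\eta$-inexact prox-mapping produced by the \texttt{condg} procedure rather than an exact projected-gradient step. Write $x=\theta_t^{s+1}$, $x^+=\theta_{t+1}^{s+1}$, $v=v_t^{s+1}$, and let $\tilde g_t=\frac{1}{\lambda}(x-x^+)$ be the realized (inexact, stochastic) gradient mapping. The termination rule $V(x^+)\le\eta$ of the procedure yields the approximate variational inequality $\langle v+\frac{1}{\lambda}(x^+-x),\,x^+-y\rangle\le\eta$ for all $y\in\Omega$. First I would set $y=x$, split $v=\nabla F(x)+(v-\nabla F(x))$, apply Young's inequality, and combine with $L$-smoothness to get a one-step descent inequality of the form
$$F(x^+)\le F(x)-\Big(\tfrac{1}{2\lambda}-\tfrac{L}{2}\Big)\|x^+-x\|^2+\tfrac{\lambda}{2}\|v-\nabla F(x)\|^2+\eta.$$
With $\lambda=\frac{1}{3L}$ the quadratic coefficient is a definite $L\|x^+-x\|^2=\frac{1}{9L}\|\tilde g_t\|^2$ of genuine decrease.

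The second ingredient is the standard SVRG minibatch bound $\mathbb{E}\|v-\nabla F(x)\|^2\le\frac{L^2}{b}\mathbb{E}\|x-\tilde\theta^s\|^2$, which converts the estimation-error term into a multiple of the deviation from the anchor $\tilde\theta^s$. To absorb this deviation I would introduce the Lyapunov function $R_t^{s+1}=\mathbb{E}\big[F(\theta_t^{s+1})+c_t\|\theta_t^{s+1}-\tilde\theta^s\|^2\big]$ and expand $\|\theta_{t+1}^{s+1}-\tilde\theta^s\|^2$ with a Young split of parameter $\rho$, producing the backward recursion $c_t=(1+\rho)c_{t+1}+\frac{L}{6b}$ with terminal value $c_m=0$. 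Choosing $\rho=1/m$ keeps $(1+\rho)^m\le e$, so that with $m=n^{1/3}$ and $b=n^{2/3}$ one gets $c_t=O(L/m)$ and a uniform positive lower bound $\Gamma_t\ge\frac{1}{27L}$ on the coefficient of $\|\tilde g_t\|^2$ in $R_{t+1}^{s+1}\le R_t^{s+1}+\eta-\Gamma_t\mathbb{E}\|\tilde g_t\|^2$.

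Then I would telescope. Within an epoch the boundary conditions $\theta_0^{s+1}=\tilde\theta^s$ (the quadratic term vanishes) and $c_m=0$ collapse $R$ to $\mathbb{E}[F(\tilde\theta^{s+1})]\le\mathbb{E}[F(\tilde\theta^s)]+m\eta-\sum_t\Gamma_t\mathbb{E}\|\tilde g_t\|^2$; summing over $s=0,\dots,S-1$ with $Sm=T$, $\tilde\theta^0=\theta_0$ and $F(\tilde\theta^S)\ge F(\theta^\star)$ gives $\sum_{s,t}\mathbb{E}\|\tilde g_t\|^2\le 27L\big(F(\theta_0)-F(\theta^\star)+T\eta\big)$, where $T\eta=1$ produces the additive constant. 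The last step converts the realized mapping $\tilde g_t$ into the true gradient mapping $g(\theta_t^{s+1},\nabla F(\theta_t^{s+1}),\lambda)$ of the statement: the exact gradient mapping is $1$-Lipschitz in its gradient argument, so replacing $v$ by $\nabla F(x)$ costs (in expectation) at most $\frac{L^2}{b}\mathbb{E}\|x-\tilde\theta^s\|^2$, while strong convexity of the prox objective turns the Frank--Wolfe gap $V(x^+)\le\eta$ into the distance bound $\|x^+-\psi(x,v,\lambda)\|^2\le 2\lambda\eta$. Averaging over the uniformly drawn output $\theta_\alpha$ and dividing by $T$ yields the claimed $O(1/T)$ rate.

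The main obstacle I anticipate is the bookkeeping where the SVRG potential and the sliding inexactness interact: one must simultaneously keep the recursion for $c_t$ bounded (forcing $\rho=1/m$) and keep $\Gamma_t$ bounded below, and then, in the conversion step, control $\sum_{s,t}\mathbb{E}\|\theta_t^{s+1}-\tilde\theta^s\|^2$. Bounding the latter by $\|\theta_t^{s+1}-\tilde\theta^s\|^2\le t\lambda^2\sum_{j<t}\|\tilde g_j\|^2$ and summing produces a factor $\lambda^2m^2\sum_{s,t}\|\tilde g_t\|^2$, and it is precisely the balance $m^2=b=n^{2/3}$ that renders this a fixed fraction of the already-controlled $\sum\|\tilde g_t\|^2$, so that every piece consolidates into a single $O\!\big(L(F(\theta_0)-F(\theta^\star)+1)/T\big)$ bound matching the stated $18L/T$. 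Getting these constants to close simultaneously, rather than merely recovering the correct $O(n^{2/3}/\epsilon)$ order, is the delicate part.
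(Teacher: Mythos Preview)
Your proposal is sound and would deliver the $O(L/T)$ rate, but it follows a genuinely different route from the paper, and you are right that your route will not recover the exact constant $18$.

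The paper does \emph{not} track the realized stochastic mapping $\tilde g_t=(\theta_t^{s+1}-\theta_{t+1}^{s+1})/\lambda$ and convert afterwards. Instead it introduces the auxiliary point $\hat\theta_{t+1}^{s+1}:=\psi(\theta_t^{s+1},\nabla F(\theta_t^{s+1}),\lambda)$ --- the \emph{exact} prox with the \emph{true} gradient --- and proves a mirror-descent lemma (for $y=\mathrm{condg}(\omega,x,\lambda,\eta)$ and any $z$,
\[
F(y)\le F(z)+\langle y-z,\nabla F(x)-\omega\rangle+\bigl(\tfrac{L}{2}-\tfrac{1}{2\lambda}\bigr)\|y-x\|^2+\bigl(\tfrac{L}{2}+\tfrac{1}{2\lambda}\bigr)\|z-x\|^2-\tfrac{1}{2\lambda}\|y-z\|^2+\eta\,)
\]
which it applies \emph{twice}: once with $(y,z,x)=(\hat\theta_{t+1}^{s+1},\theta_t^{s+1},\theta_t^{s+1})$ and once with $(y,z,x)=(\theta_{t+1}^{s+1},\hat\theta_{t+1}^{s+1},\theta_t^{s+1})$. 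Adding the two produces a one-step inequality featuring \emph{two separate} negative quadratics, $(L-\tfrac{1}{2\lambda})\|\hat\theta_{t+1}^{s+1}-\theta_t^{s+1}\|^2$ and $(\tfrac{L}{2}-\tfrac{1}{2\lambda})\|\theta_{t+1}^{s+1}-\theta_t^{s+1}\|^2$. The first \emph{is} $\lambda^2\|g(\theta_t^{s+1},\nabla F(\theta_t^{s+1}),\lambda)\|^2$ verbatim; the second is used solely to absorb the $c_{t+1}(1+1/\beta)\|\theta_{t+1}^{s+1}-\theta_t^{s+1}\|^2$ term from the Young split in the Lyapunov recursion. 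Consequently no post-hoc conversion from $\tilde g_t$ to $g$ is needed, no separate bound on $\sum_{s,t}\|\theta_t^{s+1}-\tilde\theta^s\|^2$ is required, and the coefficient $\lambda^2(\tfrac{1}{2\lambda}-L)=\tfrac{1}{18L}$ drops out exactly.

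By contrast, your single application of the termination inequality (with $y=x$) leaves only \emph{one} negative quadratic, $-L\|x^+-x\|^2$, which must simultaneously supply the $\|\tilde g_t\|^2$ decrease and absorb $c_{t+1}(1+1/\rho)\|x^+-x\|^2$; this already costs you something in $\Gamma_t$. Your conversion step then needs $\sum_{s,t}\|\theta_t^{s+1}-\tilde\theta^s\|^2\lesssim \lambda^2 m^2\sum_{s,t}\|\tilde g_t\|^2$, and it is exactly the balance $m^2=b=n^{2/3}$ that keeps this a bounded fraction of the already-controlled sum --- a nice observation, but an extra layer the paper avoids entirely. In short: your argument is more elementary (one inequality, standard Young, then patch up), the paper's is sharper (auxiliary exact-prox point plus a two-point lemma that makes the true gradient mapping appear directly and yields the stated constant without loss).
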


Using result from previous theorem, we obtain the following IFO and LO complxity.

\begin{corollary}
	Let parameters be set as in theorem \ref{finite_sum_NCGS}, the IFO and LO complexities of Algorithm \ref{alg:non-convex-finite-sum-SCGS} are $\mathcal{O}(\frac{n^{\frac{2}{3}}}{\epsilon})$ and $\mathcal{O}(\frac{1}{\epsilon^2})$ respectively for
	$ \mathbb{E}[\| g(\theta_{\alpha},\nabla F(\theta_{\alpha}),\lambda)\|^2] \leqslant \epsilon$.
\end{corollary}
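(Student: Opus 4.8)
The plan is to treat Theorem \ref{finite_sum_NCGS} as a black box that fixes how many inner iterations $T$ are needed for a prescribed accuracy, and then to convert $T$ into oracle counts by a direct bookkeeping of the per-iteration cost of Algorithm \ref{alg:non-convex-finite-sum-SCGS}. First I would set the right-hand side of the bound in Theorem \ref{finite_sum_NCGS} to $\epsilon$: solving $\frac{18L(f(\theta_0)-f(\theta^\star)+1)}{T}\le\epsilon$ yields $T=\mathcal{O}(1/\epsilon)$ as the iteration budget guaranteeing $\mathbb{E}[\|g(\theta_\alpha,\nabla F(\theta_\alpha),\lambda)\|^2]\le\epsilon$. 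Everything downstream is then a matter of counting the oracle calls incurred over these $T$ inner steps.

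For the IFO count I would split the cost per epoch into the snapshot gradient and the inner mini-batch gradients. Computing $g^{s+1}=\frac1n\sum_i\nabla f_i(\tilde\theta^s)$ costs exactly $n$ incremental gradient evaluations, while each of the $m$ inner iterations forms $v_t^{s+1}$ using $|I_t|=b$ fresh samples evaluated at two points, for $2b$ evaluations. Hence one epoch uses $n+2mb$ IFO calls, and the $S=T/m$ epochs use $S(n+2mb)=Tn/m+2Tb$ in total. Substituting the prescribed $m=n^{1/3}$ and $b=n^{2/3}$ collapses both terms to the same order, $Tn/m=Tn^{2/3}$ and $2Tb=2Tn^{2/3}$, so the IFO complexity is $\mathcal{O}(Tn^{2/3})=\mathcal{O}(n^{2/3}/\epsilon)$ after inserting $T=\mathcal{O}(1/\epsilon)$.

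For the LO count the crux is to bound the number of linear-oracle calls consumed by a single invocation of the cndg subroutine. Since the prox objective $\langle v_t^{s+1},\theta\rangle+\frac{1}{2\lambda_t}\|\theta-\theta_t^{s+1}\|^2$ is $\tfrac{1}{\lambda_t}$-smooth over the bounded domain $\Omega$ (bounded by the assumption $\psi\le M$, which supplies a finite diameter $\bar D$), the Frank-Wolfe iterates inside cndg drive the Wolfe gap $V(u_t)$ below the tolerance $\eta$ within $\mathcal{O}(\bar D^2/(\lambda_t\eta))$ steps, each step costing exactly one LO call. I would establish this by the standard $\mathcal{O}(1/t)$ gap estimate for Frank-Wolfe applied to a smooth objective over a compact convex set, which is the finite-sum analogue of the inner-loop bound for the convex sliding method. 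With the constant step $\lambda_t=1/(3L)$ and tolerance $\eta=1/T$, each cndg call therefore uses $\mathcal{O}(L\bar D^2 T)$ LO calls.

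Finally I would multiply this per-call cost by the number of cndg invocations: the inner loop calls cndg once per inner iteration, giving $S m=T$ invocations over the whole run, hence total LO complexity $\mathcal{O}(T\cdot L\bar D^2 T)=\mathcal{O}(T^2)=\mathcal{O}(1/\epsilon^2)$. The main obstacle is the middle step, namely establishing the $\mathcal{O}(\bar D^2/(\lambda_t\eta))$ bound on LO calls per cndg invocation, which relies both on the boundedness of $\Omega$ and on a careful convergence analysis of the Frank-Wolfe subsolver on the strongly convex prox objective; once that lemma is in hand, the two complexity claims follow by the elementary arithmetic above.
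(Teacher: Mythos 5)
Your proposal is correct and follows essentially the same route as the paper's own proof: solve $18L(F(\theta_0)-F(\theta^\star)+1)/T\le\epsilon$ for $T=\mathcal{O}(1/\epsilon)$, count the IFO cost as $\frac{T}{m}(n+bm)=\mathcal{O}(Tn^{2/3})$ under $m=n^{1/3}$, $b=n^{2/3}$, and bound each of the $T$ cndg calls by $\mathcal{O}\bigl(\frac{1}{\lambda\eta}\bigr)=\mathcal{O}(T)$ linear-oracle steps via the standard Frank--Wolfe gap estimate, giving $\mathcal{O}(T^2)=\mathcal{O}(1/\epsilon^2)$ LO calls. The only difference is that you spell out the diameter constant and the per-sample factor of two that the paper leaves implicit in its $\mathcal{O}(\cdot)$ bookkeeping.
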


\section{Simulation Result}
We consider a modified  matrix completion  problem for our simulation. In particular, we optimize the following trace norm constrained non-convex problem with candidate algorithms.

\begin{equation}\label{equ.matrix_completion}
\min_{\theta} \sum_{(i,j)\in \Omega} f_{i,j} (\theta) \quad \mbox{s.t.} \quad\|\theta\|_{*}\leq R,
\end{equation}
where $\Omega$ is the set of observed entries, $f_{i,j}= \big( 1-\exp(-\frac{(\theta_{i,j}-Y_{i,j})^2}{\sigma})\big)$, $Y_{i,j}$ is the observation of $(i,j)$'s entry, $\|\cdot\|_{*}$ is the nuclear norm. Here $f_{i,j}$ is a smoothed $\ell_0$ loss with enhanced robustness to outliers in the data, thus it can solve sparse+low rank matrix completion in \citep{chandrasekaran2009sparse}. Obviously, this $f_{i,j}$ is non-convex and satisfies assumptions in our algorithm \ref{alg:non-convex-CGS},\ref{alg:non-convex-SCGS},\ref{alg:non-convex-finite-sum-SCGS}.
%\subfile{VR_CGS}

We compare our non-convex conditional gradient sliding method with Frank-Wolfe method in Fig \ref{fig:matrix_completion}. Particularly, we report the result of the batched setting in Figure\ref{fig:batch}. The dimension of the matrix is $200\times 200$, rank $r=5$, the probability to observe each entry is $0.1$. The sparse noise is sampled uniformly from $[-3,3]$. Each entry is corrupted by noise with probability $0.05$. We set $\sigma=1$, $R=5$ in problem \eqref{equ.matrix_completion}. We observe that our algorithm \ref{alg:non-convex-CGS} (NCGS) is much better than the non-convex Frank-Wolfe method (NFW). In Figure \ref{fig:finite-sum}, we treat problem \eqref{equ.matrix_completion} as a finite-sum problem, thus solve it using algorithm \ref{alg:non-convex-finite-sum-SCGS} (NCGS-VR) and  compare it with the result  of SVFW \citep{reddi2016stochasticfw}. We set the dimension of the matrix as $400\times 400$, $rank~r=8$, $\sigma=1$, $R=8$. The way to generate sparse noise and the probability to observe the entry are same with the setting of Figure \ref{fig:batch}. We observe that our NCGS-VR uses around 50 cpu-time to achieve $10^{-3}$ accuracy of squared gradient mapping, while SVFW needs more than 300 cpu-time.

\begin{figure}[h]
	\begin{subfigure}{0.45\textwidth}
		\includegraphics[width=\textwidth]{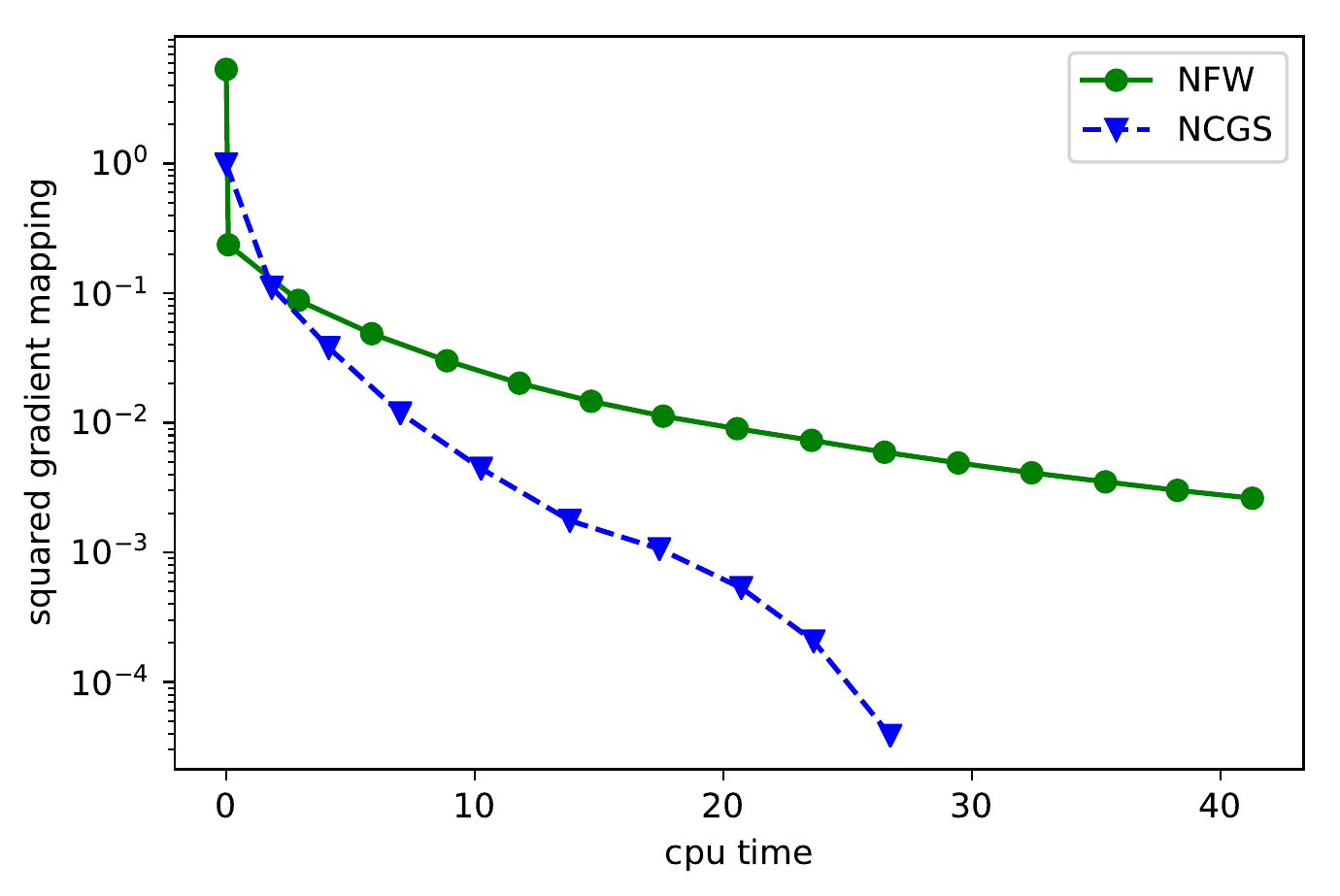}
		\caption{non-convex Frank-Wolfe and NCGS} \label{fig:batch}
	\end{subfigure}
	\begin{subfigure}{0.45\textwidth}
		\includegraphics[width=\textwidth]{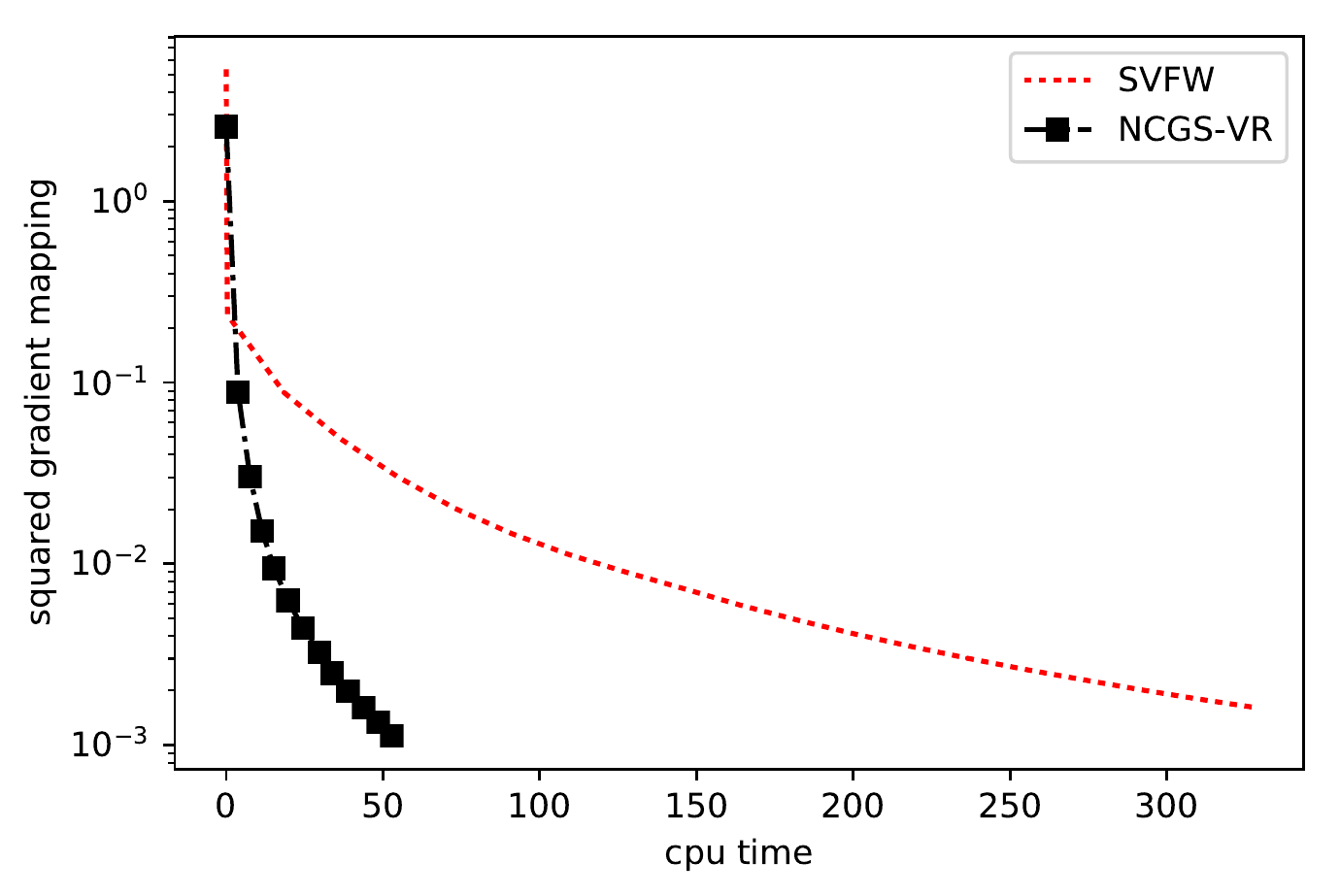}
		\caption{SVRFW and NCGS-VR}\label{fig:finite-sum}
	\end{subfigure}
	\caption{Frank-Wolfe method and non-convex conditional gradient sliding method in batched and finite-sum setting. The x-axis is the cpu-time, y-axis is the squared gradient mapping.}\label{fig:matrix_completion}
\end{figure}

\section{Conclusion}

In this paper, we propose the non-convex conditional gradient sliding method to solve the batch, stochastic and finite-sum non-convex problem with complex constraint. Our algorithms surpass  state of the art Frank-Wolfe type method  both theoretically and empirically.

\bibliography{NCGS}	
	\bibliographystyle{plainnat}
	
\newpage
\appendix
\section{Proof of Theorems and Corollaries }
In this section, we present all proofs of theorems and corollaries.

\subsection{Proof of Batched Setting}
We start with the proof of Theorem \ref{Theorem.batch}. 
\begin{proof}[proof of option I in Theorem \ref{Theorem.batch}]
	Define $\Delta_k=\nabla F(\theta_{k-1})-\nabla F(\theta_k^{md})$
	\begin{equation}\label{equ.using_smooth}
	\begin{split}
	F(\theta_k) &\leq F(\theta_{k-1})+\langle \nabla F(\theta_{k-1}),\theta_k-\theta_{k-1} \rangle+\frac{L}{2} \|\theta_k-\theta_{k-1}\|^2\\
	&=F(\theta_{k-1})+\langle \nabla F(\theta_k^{md}), \theta_k-\theta_{k-1}  \rangle+\frac{L}{2} \|\theta_k-\theta_{k-1}\|^2+\langle \Delta_k,\theta_k-\theta_{k-1} \rangle 
	\end{split}
	\end{equation}
	
	Now we use the termination condition of procedure  $condg$
	
	Recall we have  $\theta_{k}= condg(\nabla F(\theta_{k-1}^{md}),\theta_{k-1},\lambda_k,\eta_k). $
	
	The termination condition is 
	
	$$ \langle \nabla F(\theta_{k-1}^{md})+\frac{1}{\lambda_k} (\theta_{k}-\theta_{k-1}),\theta_k-u \rangle\leq \eta_k, \forall u\in \Omega.$$
	We choose $u=\theta_{k-1}$ and have
	
	\begin{equation}\label{equ.FW-gap}
	\langle \nabla F(\theta_{k-1}^{md
	}),\theta_{k}-\theta_{k-1} \rangle\leq -\frac{1}{\lambda_k} \|\theta_k-\theta_{k-1}\|^2+\eta_k.
	\end{equation} 
	
	Now we substitute the upper bound of $ \langle \nabla F(\theta_{k-1}^{md
	}),\theta_{k}-\theta_{k-1} \rangle $ in \eqref{equ.FW-gap} for the terms in \eqref{equ.using_smooth} and get
	
	\begin{equation}\label{equ.mid1}
	\begin{split}
	F(\theta_k)&\leq F(\theta_{k-1})-\frac{1}{\lambda_k}\|\theta_k-\theta_{k-1}\|^2+\frac{L}{2}\|\theta_{k}-\theta_{k-1}\|^2+\langle \Delta_k,\theta_k-\theta_{k-1} \rangle +\eta_k\\
	&\leq F(\theta_{k-1})-\frac{1}{\lambda_k}\|\theta_k-\theta_{k-1}\|^2+\frac{L}{2}\|\theta_{k}-\theta_{k-1}\|^2+\|\Delta_k\|\|\theta_k-\theta_{k-1}\|+\eta_k.
	\end{split}
	\end{equation}
	where the second inequality holds from the Cauchy-Schwarz inequality.
	
	Now we prepare to bound term $\|\Delta_k\|$, recall that $\Delta_k=\nabla F(\theta_{k-1})-\nabla F(\theta_{k}^{md})$.
	
	$$\|\Delta_k\|=\|\nabla F(\theta_{k-1})-\nabla F(\theta_{k}^{md})\|\leq L\|\theta_{k-1}-\theta_{k}^{md} \|=L(1-\alpha_k) \|\theta_{k-1}^{ag}-\theta_{k-1}\|.$$
	Replace $\|\Delta_k\|$ by this upper bound in \eqref{equ.mid1}, we get
	
	\begin{equation}\label{equ.mid2}
	\begin{split}
	F(\theta_k)&\leq F(\theta_{k-1})-\frac{1}{\lambda_k}\|\theta_k-\theta_{k-1}\|^2+\frac{L}{2}\|\theta_{k}-\theta_{k-1}\|^2+L(1-\alpha_k) \|\theta_{k-1}^{ag}-\theta_{k-1}\|\|\theta_k-\theta_{k-1}\|+\eta_k\\
	&\leq F(\theta_{k-1})+(\frac{L}{2}-\frac{1}{\lambda_k})\|\theta_k-\theta_{k-1}\|^2+\frac{L}{2}\|\theta_{k}-\theta_{k-1}\|^2+\frac{L(1-\alpha_{k})^2}{2}\|\theta_{k-1}^{ag}-\theta_{k-1}\|^2+\eta_k\\
	&\leq F(\theta_{k-1}) +(L-\frac{1}{\lambda_k})\|\theta_k-\theta_{k-1}\|^2+\frac{L(1-\alpha_k)^2}{2} \|\theta_{k-1}^{ag}-\theta_{k-1}\|^2+\eta_k,
	\end{split}
	\end{equation}
	where the second inequity holds from the fact $a^2+b^2\geq 2ab$.
	
	Recall the definition of approximated gradient mapping, so we have
	\begin{equation}
	\begin{split}
	&\theta_k^{ag}-\theta_k\\
	= &\theta_{k}^{md}-\beta_k \tilde{g}(\theta_{k-1},\nabla F(\theta_{k}^{md}),\lambda_k,\eta_k)-\theta_k\\
	=&(1-\alpha_k)\theta_{k-1}^{ag} +\alpha_k \theta_{k-1}-\beta_k \tilde{g}(\theta_{k-1},\nabla F(\theta_{k}^{md}),\lambda_k,\eta_k)- \big( \theta_{k-1}-\lambda_k \tilde{g}(\theta_{k-1},\nabla F(\theta_{k}^{md}),\lambda_k,\eta_k)  \big)\\
	=&(1-\alpha) (\theta_{k-1}^{ag}-\theta_{k-1})+(\lambda_k-\beta_k) \tilde{g}(\theta_{k-1},\nabla F(\theta_{k}^{md}),\lambda_k,\eta_k).
	\end{split}
	\end{equation}
	
	Now we apply Lemma \ref{lemma.Gamma} on $\theta_{k}^{ag}-\theta_k$ and have 
	
	$$ \theta_{k}^{ag}-\theta_k=\Gamma_{k} \sum_{\tau=1}^{k} (\frac{\lambda_\tau-\beta_\tau}{\Gamma_{\tau}}) \tilde{g}(\theta_{\tau-1},\nabla F(\theta_{\tau}^{md}),\lambda_{\tau},\eta_{\tau}). $$

	Now using Jensens's inequality and the fact that 
	
	$$ \sum_{\tau=1}^{k} \frac{\alpha_{\tau}}{\Gamma_{\tau}} =\frac{\alpha_1}{\Gamma_1}+\sum_{\tau=2}^{k} \frac{1}{\Gamma_{\tau}} (1-\frac{\Gamma_{\tau}}{\Gamma_{\tau-1}})=\frac{1}{\Gamma_k},$$ we have
	
	\begin{equation}
	\begin{split}
	\|\theta_{k}^{ag}-\theta_{k}\|^2&=\| \Gamma_{k} \sum_{\tau=1}^{k} (\frac{\lambda_\tau-\beta_\tau}{\Gamma_{\tau}}) \tilde{g}(\theta_{\tau-1},\nabla F(\theta_{k}^{md}),\lambda_{\tau},\eta_{\tau}) \|^2\\
	&=\| \Gamma_{k} \sum_{\tau=1}^{k} \frac{\alpha_{\tau}}{\Gamma_\tau} (\frac{\lambda_\tau-\beta_\tau}{\alpha_{\tau}}) \tilde{g}(\theta_{\tau-1},\nabla F(\theta_{k}^{md}),\lambda_{\tau},\eta_{\tau}) \|^2\\
	&\leq \Gamma_k \sum_{\tau=1}^{k} \frac{\alpha_{\tau}}{\Gamma_{\tau}} \|(\frac{\lambda_\tau-\beta_\tau}{\alpha_{\tau}}) \tilde{g}(\theta_{\tau-1},\nabla F(\theta_{k}^{md}),\lambda_{\tau},\eta_{\tau})\|^2\\
	&=\Gamma_k \sum_{\tau=1}^{k} \frac{(\lambda_{\tau}-\beta_\tau)^2}{\Gamma_{\tau}\alpha_{\tau}} \| \tilde{g}(\theta_{\tau-1},\nabla F(\theta_{k}^{md}),\lambda_{\tau},\eta_{\tau}) \|^2.
	\end{split}
	\end{equation}
	
	Now replace above upper bound in \eqref{equ.mid2} we have 
	
	\begin{equation}
	\begin{split}
	F(\theta_{k})&\leq F(\theta_{k-1})-\lambda_k (1-L\lambda_k)\| \tilde{g}(\theta_{k-1},\nabla F(\theta_{k}^{md}),\lambda_k,\eta_k)\|^2\\
	&+\frac{L(1-\alpha_{k})^2}{2} \Gamma_k \sum_{\tau=1}^{k-1} \frac{(\lambda_{\tau}-\beta_\tau)^2}{\Gamma_{\tau}\alpha_{\tau}} \| \tilde{g}(\theta_{\tau-1},\nabla F(\theta_{k}^{md}),\lambda_{\tau},\eta_{\tau}) \|^2+\eta_k\\
	&\leq F(\theta_{k-1})-\lambda_k (1-L\lambda_k)\| \tilde{g}(\theta_{k-1},\nabla F(\theta_{k}^{md}),\lambda_k,\eta_k)\|^2+\\
	&\frac{L\Gamma_{k}}{2} \sum_{\tau=1}^{k} \frac{(\lambda_{\tau}-\beta_\tau)^2}{\Gamma_{\tau}\alpha_{\tau}} \| \tilde{g}(\theta_{\tau-1},\nabla F(\theta_{k}^{md}),\lambda_{\tau},\eta_{\tau}) \|^2+\eta_k.
	\end{split}
	\end{equation}

	Now sum over both side, we obtain
	
	\begin{equation}
	\begin{split}
	F(\theta_{N})&\leq F(\theta_0)-\sum_{k=1}^{N} \lambda_k (1-L\lambda_k)\| \tilde{g}(\theta_{k-1},\nabla F(\theta_{k}^{md}),\lambda_k,\eta_k)\|^2\\
	&+\sum_{k=1}^{N}\frac{L\Gamma_{k}}{2} \sum_{\tau=1}^{k} \frac{(\lambda_{\tau}-\beta_\tau)^2}{\Gamma_{\tau}\alpha_{\tau}} \| \tilde{g}(\theta_{\tau-1},\nabla F(\theta_{k}^{md}),\lambda_{\tau},\eta_{\tau}) \|^2
	+\sum_{k=1}^{N}\eta_k\\
	&=F(\theta_0)-\sum_{k=1}^{N} \lambda_k (1-L\lambda_k)\| \tilde{g}(\theta_{k-1},\nabla F(\theta_{k}^{md}),\lambda_k,\eta_k)\|^2\\
	& +\frac{L}{2}\sum_{k=1}^{N} \frac{(\lambda_k-\beta_k)^2}{\Gamma_{k}\alpha_{k}} \big( \sum_{\tau=k}^{N} \Gamma_\tau\big)\|\tilde{g}(\theta_{k-1},\nabla F(\theta_{k}^{md}),\lambda_k,\eta_k)\|^2
	+\sum_{k=1}^{N}\eta_k\\
	&=F(\theta_0)-\sum_{k=1}^{N}\lambda_kC_k \|\tilde{g}(\theta_{k-1},\lambda_k,\eta_k)\|^2+\sum_{k=1}^{N}\eta_k,
	\end{split}
	\end{equation}
	
	where $C_k=1-L\lambda_k-\frac{L(\lambda_k-\beta_k)^2}{2\alpha_k\Gamma_k\lambda_k} (\sum_{\tau=k}^{N}\Gamma_\tau).$

	Now rearrange terms and using the fact that $F(\theta^*)\leq F(\theta_N)$, we obtain
	
	$$ \min_{k=1,...,N}  \|\tilde{g}(\theta_{k-1},\nabla F(\theta_{k}^{md}),\lambda_k,\eta_k)\|^2 \big( \sum_{k=1}^{N} \lambda_kC_k  \big)\leq F(\theta_0)-F(\theta^*)+\sum_{k=1}^{N}\eta_k.$$

	Recall $\alpha_k=\frac{2}{k+1}$, $\beta_k=1/(2L),\lambda_k\in [\beta_k, (1+\frac{\alpha_k}{4})\beta_k],\eta_k=\frac{1}{N}$. In this setting, we have

	\begin{equation}\label{equ.gamma_relation}
	\Gamma_k=\frac{2}{k(k+1)}, \sum_{\tau=k}^{N}\Gamma_\tau=\sum_{\tau=k}^{N}\frac{2}{\tau(\tau+1)}\leq \frac{2}{k}.
	\end{equation}

	Since $0\leq \lambda_k-\beta_k\leq \alpha_k\beta_k/4$.
	
	We got $C_k\geq 1-L\big[(1+\frac{\alpha_k}{4})\beta_k+\frac{\alpha_k^2\beta^2_k}{16k\alpha_k\Gamma_k\beta_k}  \big]=1-\beta_kL (1+\frac{\alpha_k}{4}+\frac{1}{16})\geq 1-\beta_kL\frac{21}{16}=11/32$.
	
	So we have $\lambda_kC_k\geq \frac{11\beta_k}{32}\geq \frac{1}{6L}$.
	
	$$ \min_{k=1,...,N}  \|\tilde{g}(\theta_{k-1},\nabla F(\theta_k^{md}),\lambda_k,\eta_k)\|^2\leq \frac{6L (F(\theta_0))-F(\theta^*) +1)}{N}$$

	The next step is to bound the distance of approximated gradient mapping and true gradient mapping, i.e., $$\|\tilde{g}(\theta_{k-1},\nabla F(\theta_{k}^{md}),\lambda_k,\eta_k)-g(\theta_{k-1},\nabla F(\theta_{k}^{md}),\lambda_k)\|^2.$$
	
	Using Lemma \ref{lemma.bound_gradient_mapping}, we obtain 
	
	$\|\tilde{g}(\theta_{k-1},\nabla F(\theta_{k}^{md}),\lambda_k,\eta_k)-g(\theta_{k-1},\nabla F(\theta_{k}^{md}),\lambda_k)\|^2\leq \eta_k/\lambda_k\leq \frac{2L}{N}$.
	
	Thus we have 
	
	$$\min_{k=1,...,N} \|g(\theta_{k-1},\nabla F(\theta_{k}^{md}),\lambda_k)\|^2\leq \frac{12L(F(\theta_0)-F(\theta^*))+16L}{N}. $$
	
\end{proof}

\begin{proof}[Proof of option II in Theorem \ref{Theorem.batch}]

	Note that the procedure  $condg (\nabla F(\theta_{k}^{md}),\theta_{k-1}, \lambda_k,\eta_k) $actually solve the following problem with tolerance $\eta_k$.
	
	$$ \min_{\theta\in \Omega} \langle \nabla F (\theta_k^{md}),\theta \rangle+\frac{1}{2\lambda_k}\|\theta_{k-1}-\theta\|^2. $$ 
	
	Using strong convexity of above objective function ($w.r.t.~\theta$), we have $\forall \theta\in \Omega $

	\begin{equation}
	\begin{split}
	&\langle \nabla F (\theta_k^{md}),\theta \rangle+\frac{1}{2\lambda_k}\|\theta_{k-1}-\theta\|^2- \langle \nabla F (\theta_k^{md}), \theta_k \rangle-\frac{1}{2\lambda_k}\|\theta_{k-1}-\theta_k\|^2\\
	\geq & \langle  \nabla F (\theta_k^{md})+\frac{1}{\lambda_k} (\theta_k-\theta_{k-1}),\theta-\theta_k \rangle+\frac{1}{2\lambda_k}\|\theta-\theta_k\|^2.
	\end{split}
	\end{equation}
	
	Recall the termination condition
	
	$$ \langle \nabla F(\theta_k^{md})+ \frac{1}{\lambda_k} (\theta_k-\theta_{k-1}) , \theta_k-\theta \rangle\leq \eta_k, \forall \theta\in \Omega, $$
	
	and rearrange terms, we have
	
	\begin{equation}\label{equ.optionII.mid}
	\langle \nabla F(\theta_k^{md}),\theta_k-\theta \rangle\leq \frac{1}{2\lambda_k} \big( \|\theta_{k-1}-\theta\|^2-\|\theta_{k}-\theta\|^2- \|\theta_{k-1}-\theta_k\|^2       \big)+\eta_k. 
	\end{equation}

	Apply same argument on $\theta_{k}^{ag}$, we have

	\begin{equation}\label{equ.optionII.mid2}
	\langle \nabla F(\theta_k^{md}),\theta_k^{ag}-\theta \rangle\leq \frac{1}{2\beta_k} \big( \|\theta_k^{md}-\theta\|^2-\|\theta_{k}^{ag}-\theta\|^2- \|\theta_{k}^{ag}-\theta_k^{md}\|^2       \big)+\chi_k. 
	\end{equation}

	Now choose $\theta=\alpha_k\theta_k+(1-\alpha_{k})\theta_{k-1}^{ag}$ in \eqref{equ.optionII.mid2} and we have

	\begin{equation}\label{equ.optionII.mid3}
	\begin{split}
	\langle F(\theta_k^{md}),\theta_k^{ag}-\alpha_k\theta_k-(1-\alpha_k)\theta_{k-1}^{ag} \rangle\leq &\frac{1}{2\beta_k} \big(  \|\theta_k^{md}-\alpha_{k} \theta_k-(1-\alpha_k) \theta_{k-1}^{ag}\|^2-\|\theta_k^{ag}-\theta_{k}^{md}\|^2 \big)\\
	=& \frac{1}{2\beta_k}\big( \alpha_k^2\| \theta_k-\theta_{k-1} \|^2-\|\theta_k^{ag}-\theta_k^{md}\|^2      \big)+\chi_k.
	\end{split}
	\end{equation}
	
	Add \eqref{equ.optionII.mid3} and $\alpha_{k}\times$\eqref{equ.optionII.mid}, we have
	
	\begin{equation}\label{equ.optionII.mid4}
	\begin{split}
	&\langle \nabla F(\theta_k^{md}), \theta_{k}^{ag}-\alpha_{k}\theta-(1-\alpha_k)\theta_{k-1}^{ag} \rangle\\
	\leq & \frac{\alpha_k}{2\lambda_k} (\|\theta_{k-1}-\theta\|^2-\|\theta_{k}-\theta\|^2)+\frac{\alpha_{k} (\lambda_k\alpha_k-\beta_k)}{2\beta_k\lambda_k}\|\theta_k-\theta_{k-1}\|^2-\frac{1}{2\beta_k}\|\theta_k^{ag}-\theta_k^{md}\|^2\\
	\leq & \frac{\alpha_k}{2\lambda_k} (\|\theta_{k-1}-\theta\|^2-\|\theta_{k}-\theta\|^2)-\frac{1}{2\beta_k}\|\theta_k^{ag}-\theta_k^{md}\|^2+\alpha_k\eta_k+\chi_k,
	\end{split}
	\end{equation}
	where the last inequality uses the assumption $\lambda_k\alpha_k\leq \beta_k$.
	
	Note that	
	\begin{equation}\label{equ.optionII.mid5}
	\begin{split}
	&\alpha_k (F(\theta_k^{md})-F(\theta))+(1-\alpha_k) (F(\theta_k^{md})-F(\theta_{k-1}^{ag}))\\
	\leq & \alpha_k (\langle \nabla F(\theta_{k}^{md}), \theta_k^{md}-\theta     \rangle+\frac{l}{2}\|\theta-\theta_k^{md
	}\|^2 )+ (1-\alpha_{k})\big(  \langle \nabla F(\theta_{k}^{md}), \theta_k^{md}-\theta_{k-1}^{ag} \rangle+\frac{l}{2}\|\theta_{k}^{md}-\theta_{k-1}^{ag}\|^2   \big)\\
	=& \langle \nabla F(\theta_k^{md}),\theta_k^{md}-\alpha_{k}\theta-(1-\alpha_{k})\theta_{k-1}^{ag}   \rangle+ \frac{\alpha_{k}l}{2} \|\theta-\theta_k^{md}\|^2 +\frac{(1-\alpha_k)l}{2}\|\theta_k^{md}-\theta_{k-1}^{ag}\|^2\\
	=&  \langle \nabla F(\theta_k^{md}),\theta_k^{md}-\alpha_{k}\theta-(1-\alpha_{k})\theta_{k-1}^{ag}   \rangle+ \frac{\alpha_{k}l}{2} \|\theta-\theta_k^{md}\|^2 + \frac{\alpha_k^2(1-\alpha_k)l}{2}\|\theta_{k-1}^{ag}-\theta_{k-1}\|^2,
	\end{split}
	\end{equation}

	and
	\begin{equation}
	F(\theta_k^{ag})\leq F(\theta_k^{md})+\langle \nabla F(\theta_k^{md}), \theta_{k}^{ag}-\theta_k^{md} \rangle+\frac{L}{2}\|\theta_k^{ag}-\theta_k^{md}\|^2.
	\end{equation}
	
	Combine above equation with \eqref{equ.optionII.mid4} and \eqref{equ.optionII.mid5}, we have

	\begin{equation}
	\begin{split}
	&F(\theta_k^{ag})-F(\theta)\leq (1-\alpha_k) F(\theta_{k-1}^{ag})-(1-\alpha_{k})F(\theta) -\frac{1}{2}(\frac{1}{\beta_k}-L) \|\theta_{k}^{ag}-\theta_{k}^{md}\|^2\\
	&+\frac{\alpha_k}{2\lambda_k} ( \|\theta_{k-1}-\theta\|^2-\|\theta_{k}-\theta\|^2 )+\frac{l\alpha_{k}}{2} \|\theta_{k}^{md}-\theta\|^2+\frac{l\alpha_{k}^2 (1-\alpha)_k}{2} \|\theta_{k-1}^{ag}-\theta_{k-1}\|^2+\alpha_k\eta_k+\chi_k.
	\end{split}
	\end{equation}
	
	Now we apply Lemma \ref{lemma.Gamma} and have
	
	\begin{equation}
	\begin{split}
	&\frac{F(\theta_N^{ag})-F(\theta)}{\Gamma_N}+\sum_{k=1}^{N} \frac{1-L\beta_k}{2\beta_k\Gamma_k} \|\theta_k^{ag}-\theta_k^{md}\|^2\\
	\leq &\frac{\|\theta_0-\theta\|^2}{2\lambda_1}+\frac{l}{2} \sum_{k=1}^{N} \frac{\alpha_k}{\Gamma_k} \big( \|\theta_k^{md}-\theta\|^2+\alpha_k(1-\alpha_k)\|\theta_{k-1}^{ag}-\theta_{k-1}\|^2 \big)+\sum_{k=1}^{N}\frac{\alpha_k}{\Gamma_k} \eta_k+\sum_{k=1}^{N}\frac{\chi_k}{\Gamma_k}.   
	\end{split}
	\end{equation}
	
	Now set $\theta=\theta^*$ in above equation, and notice 
	
	\begin{equation}
	\begin{split}
	&\|\theta_k^{md}-\theta^*\|^2+\alpha_k(1-\alpha_k)\|\theta_{k-1}^{ag}-\theta_{k-1}\|^2\\
	\leq &2\big( \|\theta^*\|+\|\theta^{md}_k\|^2+ \alpha_k(1-\alpha_k) ( \|\theta^{ag}_{k-1}\|^2 +\|\theta_{k-1}\|^2)    \big)\\
	\leq &2 \big( \|\theta^*\|^2+(1-\alpha_k) \|\theta_{k-1}^{ag}\|^2+\alpha_k \|\theta_{k-1}\|^2+\alpha_k(1-\alpha_k) ( \|\theta^{ag}_{k-1}\|^2 +\|\theta_{k-1}\|^2)     \big)\\
	\leq & 2 (\|\theta^*\|^2+\|\theta_{k-1}^{ag}\|^2+\|\theta_{k-1}\|^2)\\
	\leq &2(\|\theta^*\|^2+2M^2),
	\end{split}
	\end{equation}
	
	and recall the definition of $\Gamma_k$ and \eqref{equ.gamma_relation} we obtain

	\begin{equation}
	\begin{split}
	&\frac{F(\theta_N^{ag})-F(\theta)}{\Gamma_N}+\sum_{k=1}^{N} \frac{1-L\beta_k}{2\beta_k\Gamma_k} \|\theta_k^{ag}-\theta_k^{md}\|^2\\\leq &\frac{\|\theta_0-\theta^*\|^2}{2\lambda_1} +l\sum_{k=1}^{N} \frac{\alpha_k}{\Gamma_k} (\|\theta^*\|^2+2M^2)+\sum_{k=1}^{N}\frac{\chi_k}{\Gamma_k}+\sum_{k=1}^{N}\frac{\alpha_k}{\Gamma_k} \eta_k\\
	\leq &  \frac{\|\theta_0-\theta^*\|^2}{2\lambda_1}+\frac{l}{\Gamma_N} (\|\theta^*\|^2+2M^2)+\sum_{k=1}^{N}\frac{\chi_k}{\Gamma_k}+\sum_{k=1}^{N}\frac{\alpha_k}{\Gamma_k} \eta_k.
	\end{split}
	\end{equation}
	
	Now using the setup of $ \chi_k,\eta_k, \gamma_k$ in the theorem \ref{Theorem.stochastic} we obtain
	
	$$ \min_{k=1,...,N} \|\theta_k^{ag}-\theta_k^{md}\|^2 \leq \frac{6}{L} \big(  \frac{4L \|\theta_0-\theta^*\|^2}{N^2(N+1)}+\frac{l}{N} (\|\theta^*\|^2+2M^2) +\frac{1}{N} \big). $$
	
	Recall the definition of approximated gradient mapping $ \tilde{g}(\theta_{k-1},\nabla F(\theta_{k}^{md}),\beta_k,\chi_k):= \frac{\theta^{md}_k-\theta_k^{ag}}{\beta_k}.$

	Thus  $\min_{k=1,...,N}  \|\tilde{g}(\theta_{k-1},\nabla F(\theta_{k}^{md}),\beta_k,\chi_k)\|^2\leq 24L \big(  \frac{4L \|\theta_0-\theta^*\|^2}{N^2(N+1)}+\frac{l}{N} (\|\theta^*\|^2+2M^2)+\frac{1}{N}  \big).$

	Using Lemma \ref{lemma.bound_gradient_mapping}, we have
	
	$$ \|\tilde{g}(\theta_{k-1},\nabla F(\theta_{k}^{md}),\beta_k,\chi_k)-g(\theta_{k-1},\nabla F(\theta_{k}^{md}),\beta_k)\|^2 \leq \chi_k/\beta_k ,$$
	thus we obtain
	
	$$ \min_{k=1,...,N} \|g(\theta_{k-1},\nabla F(\theta_{k}^{md}),\beta_k)\|^2\leq  48L \big(  \frac{4L \|\theta_0-\theta^*\|^2}{N^2(N+1)}+\frac{l}{N} (\|\theta^*\|^2+2M^2)+\frac{2}{N}  \big). $$

\end{proof}

\begin{proof}[Proof of corollary \ref{Corollary.batch}]
	
	Note that the procedure  $condg (\nabla F(\theta_{k}^{md}),\theta_{k-1}, \lambda_k,\eta_k) $	actually solves the following problem using frank-wolfe method with tolerance $\eta_k$. In option I, In each call of condg, we need $\frac{1}{2\lambda_k}/\eta_k=NL$ steps to converge with tolerance $\eta_k$ according to the standard proof of Frank-Wolfe method. Thus the total number of LO is $\mathcal{O} (N^2).$  Similarly, in option II, we have two calls of condg, where they need $\frac{1}{2\lambda_k}/\eta_k= \lceil \frac{2NL}{k} \rceil$ and  $\frac{1}{2\beta_k}/\chi_k=NL$ steps to converges with tolerance $\eta_k$ and $\chi_k$. Thus the total number of LO is $\mathcal{O} (N^2).$
	
\end{proof}

\begin{lemma}\label{lemma.bound_gradient_mapping}
	$\|\tilde{g}(\theta_{k-1},\nabla F(\theta_{k}^{md}),\lambda_k,\eta_k)-g(\theta_{k-1},\nabla F(\theta_{k}^{md}),\lambda_k)\|^2\leq \eta_k/\lambda_k$, where $\lambda_k$ is the stepsize in the algorithm, $\eta_k$ the tolerance in the procedure $condg$.
\end{lemma}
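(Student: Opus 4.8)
The plan is to reduce the claimed inequality to a bound on the distance between the approximate prox point returned by $condg$ and the exact prox point $\psi$, and then to extract that distance bound from the termination condition by exploiting strong convexity of the prox subproblem.

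First I would unwind the two definitions. Write $\theta_k^{\ast}=\psi(\theta_{k-1},\nabla F(\theta_k^{md}),\lambda_k)$ for the exact minimizer, so that by definition of the gradient mapping $g(\theta_{k-1},\nabla F(\theta_k^{md}),\lambda_k)=(\theta_{k-1}-\theta_k^{\ast})/\lambda_k$, whereas the approximate gradient mapping is $\tilde g=(\theta_{k-1}-\theta_k)/\lambda_k$ with $\theta_k$ the $condg$ output. Subtracting, the two $\theta_{k-1}$ terms cancel and
$$\|\tilde g-g\|^2=\frac{1}{\lambda_k^2}\|\theta_k-\theta_k^{\ast}\|^2,$$
so it suffices to prove $\|\theta_k-\theta_k^{\ast}\|^2\le\lambda_k\eta_k$.

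Next I would introduce the prox objective $\phi(\theta)=\langle\nabla F(\theta_k^{md}),\theta\rangle+\frac{1}{2\lambda_k}\|\theta-\theta_{k-1}\|^2$, whose gradient is $\nabla\phi(\theta)=\nabla F(\theta_k^{md})+\frac{1}{\lambda_k}(\theta-\theta_{k-1})$; it is $(1/\lambda_k)$-strongly convex and its constrained minimizer over $\Omega$ is exactly $\theta_k^{\ast}$. The termination rule of $condg$ is precisely the Wolfe-gap bound $\langle\nabla\phi(\theta_k),\theta_k-u\rangle\le\eta_k$ for every $u\in\Omega$; taking $u=\theta_k^{\ast}$ gives the upper bound $\langle\nabla\phi(\theta_k),\theta_k-\theta_k^{\ast}\rangle\le\eta_k$. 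For the matching lower bound I would combine strong monotonicity of $\nabla\phi$, namely $\langle\nabla\phi(\theta_k)-\nabla\phi(\theta_k^{\ast}),\theta_k-\theta_k^{\ast}\rangle\ge\frac{1}{\lambda_k}\|\theta_k-\theta_k^{\ast}\|^2$, with the first-order optimality of $\theta_k^{\ast}$, i.e.\ $\langle\nabla\phi(\theta_k^{\ast}),\theta_k-\theta_k^{\ast}\rangle\ge0$ (valid since $\theta_k\in\Omega$). Adding these two facts yields $\langle\nabla\phi(\theta_k),\theta_k-\theta_k^{\ast}\rangle\ge\frac{1}{\lambda_k}\|\theta_k-\theta_k^{\ast}\|^2$, and chaining with the upper bound gives $\|\theta_k-\theta_k^{\ast}\|^2\le\lambda_k\eta_k$, which closes the argument.

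The one place to be careful — and what I expect to be the main subtlety rather than a genuine obstacle — is the constant. The naive route, bounding the function-value suboptimality $\phi(\theta_k)-\phi(\theta_k^{\ast})\le\eta_k$ by convexity and the termination condition, then invoking $\phi(\theta_k)-\phi(\theta_k^{\ast})\ge\frac{1}{2\lambda_k}\|\theta_k-\theta_k^{\ast}\|^2$ from strong convexity, loses a factor of two and only delivers $\|\tilde g-g\|^2\le2\eta_k/\lambda_k$. Routing instead through strong monotonicity of the gradient together with the variational inequality at $\theta_k^{\ast}$, as above, is exactly what produces the sharp constant $\eta_k/\lambda_k$ stated in the lemma.
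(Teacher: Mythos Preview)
Your proposal is correct and is essentially the same argument as the paper's. The paper also plugs $u=\theta_k^\ast$ into the termination inequality, splits $\nabla\phi(\theta_k)=\nabla\phi(\theta_k^\ast)+\frac{1}{\lambda_k}(\theta_k-\theta_k^\ast)$ (which you phrase as strong monotonicity, but here is an exact identity since $\phi$ is quadratic), and drops the nonnegative term $\langle\nabla\phi(\theta_k^\ast),\theta_k-\theta_k^\ast\rangle$ via the optimality condition to obtain $\|\theta_k-\theta_k^\ast\|^2\le\lambda_k\eta_k$.
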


\begin{proof}
	Define $ \tilde{\theta}=condg(l,u,\lambda,\eta)$, $\hat{\theta}=\arg\min_{x\in \Omega} \langle l, x  \rangle+\frac{1}{2\lambda}\|x-\mu\|^2 $.
	
	Using the termination condition of the procedure, we have
	$$ \langle l+\frac{1}{\lambda}(\tilde{\theta}-u),\tilde{\theta}-x \rangle\leq \eta, \forall x\in \Omega. $$
	
	Now we choose $x=\hat{\theta}$ and rearrange the therm then we have
	
	$$\langle l+\frac{1}{\lambda}(\hat{\theta}-u),\tilde{\theta}-\hat{\theta} \rangle+\frac{1}{\lambda}\|\tilde{\theta}-\hat{\theta}\|^2\leq \eta.$$
	
	Notice $\langle l+\frac{1}{\lambda}(\hat{\theta}-u),\tilde{\theta}-\hat{\theta} \rangle\geq 0 $ by the optimal condition of $\hat{\theta}$. Thus we have $\|\tilde{\theta}-\hat{\theta}\|^2\leq  \eta\lambda $, i.e.,  $\| (\tilde{\theta}-\hat{\theta})/\lambda\|^2\leq  \eta/\lambda.$
	
\end{proof}

\begin{lemma}\label{lemma.Gamma}
	Let $\alpha_{k}$ be the stepsize in the algorithm \ref{Theorem.stochastic} option II, and the sequence $\{ h_k\}$ satisfies 
	\begin{equation}\label{equ.recursive}
	h_k \leq (1-\alpha_k) h_{k-1} +\psi_k,~ k=1,2,...,
	\end{equation}

	then we have $h_k\leq \Gamma_k\sum_{i=1}^{k}(\psi_i/\Gamma_i)$ for any $k\geq 1$, where 
	
	$$\Gamma_k=\begin{cases}
	1, & k=1\\
	(1-\alpha_k) \Gamma_{k-1} & k\geq 2\\
	\end{cases} 
	$$
\end{lemma}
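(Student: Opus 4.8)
The plan is to reduce the weighted recursion \eqref{equ.recursive} to a plain telescoping inequality by dividing through by the accumulated factor $\Gamma_k$. The key algebraic fact is the defining relation $\Gamma_k=(1-\alpha_k)\Gamma_{k-1}$, which for $k\geq 2$ gives $\frac{1-\alpha_k}{\Gamma_k}=\frac{1}{\Gamma_{k-1}}$. Dividing \eqref{equ.recursive} by $\Gamma_k$ then yields
$$\frac{h_k}{\Gamma_k}\leq \frac{h_{k-1}}{\Gamma_{k-1}}+\frac{\psi_k}{\Gamma_k},$$
so the normalized quantity $h_k/\Gamma_k$ increases by at most $\psi_k/\Gamma_k$ at each step. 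Summing this telescoping bound and multiplying back by $\Gamma_k$ recovers exactly the claimed estimate. I would package this observation as a clean induction on $k$ rather than writing out the sum manually.

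For the induction, the base case $k=1$ requires $h_1\leq \Gamma_1(\psi_1/\Gamma_1)=\psi_1$. Since $\Gamma_1=1$ and the prescribed schedule $\alpha_k=\frac{2}{k+1}$ gives $\alpha_1=1$, the recursion at $k=1$ collapses to $h_1\leq(1-\alpha_1)h_0+\psi_1=\psi_1$, so the base case holds. For the inductive step I would assume $h_{k-1}\leq \Gamma_{k-1}\sum_{i=1}^{k-1}\psi_i/\Gamma_i$, substitute into \eqref{equ.recursive}, and apply $(1-\alpha_k)\Gamma_{k-1}=\Gamma_k$ to obtain
$$h_k\leq (1-\alpha_k)\Gamma_{k-1}\sum_{i=1}^{k-1}\frac{\psi_i}{\Gamma_i}+\psi_k=\Gamma_k\sum_{i=1}^{k-1}\frac{\psi_i}{\Gamma_i}+\psi_k=\Gamma_k\sum_{i=1}^{k}\frac{\psi_i}{\Gamma_i},$$
which closes the induction and proves the lemma for all $k\geq 1$.

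The argument is essentially routine, so there is no serious technical obstacle; the single point that needs care is the base case, which hinges on the term $(1-\alpha_1)h_0$ vanishing. This is precisely why the conclusion carries no $h_0$ contribution: it relies on $\alpha_1=1$ under the given schedule. Were $\alpha_1\neq 1$ one would instead pick up an extra $\Gamma_k h_0$ term in the bound, so I would be careful to flag this dependence on the initialization (and on $\Gamma_1=1$) whenever the lemma is invoked in the convergence proofs of Theorems \ref{Theorem.batch} and \ref{Theorem.stochastic}.
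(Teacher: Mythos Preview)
Your proposal is correct and follows essentially the same approach as the paper: divide the recursion by $\Gamma_k$, use $(1-\alpha_k)/\Gamma_k=1/\Gamma_{k-1}$ for $k\geq 2$ together with $\alpha_1=1$ for the base case, and then telescope. The only cosmetic difference is that you frame the telescoping as an explicit induction, whereas the paper simply sums the inequalities $h_k/\Gamma_k\leq h_{k-1}/\Gamma_{k-1}+\psi_k/\Gamma_k$; your added remark that the absence of an $h_0$ term hinges on $\alpha_1=1$ is a nice clarification the paper leaves implicit.
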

\begin{proof}
	Notice $\alpha_1=1$ and $\alpha \in (0,1)$ for $k\geq 2$ and then divide both side of \eqref{equ.recursive} by $\Gamma_k$, we have
	
	$$ \frac{h_1}{\Gamma_1}\leq \frac{(1-\alpha_1)h_1}{\Gamma_1}+\frac{\psi_1}{\Gamma_1}= \frac{\psi_1}{\Gamma_1}$$
	and
	
	$$ \frac{h_k}{\Gamma_k} \leq \frac{(1-\alpha_k)h_{k-1}}{\Gamma_k}+\frac{\psi_k}{\Gamma_k}=\frac{h_{k-1}}{\Gamma_{k-1}}+\frac{\psi_k}{\Gamma_k}, ~~for~~ k\geq 2.$$
	
	Sum over both side, we have the result. 
\end{proof}

\subsection{Proof of Stochastic Setting}

\begin{proof}[Proof of Theorem \ref{Theorem.stochastic}]
We denote $\bar{\delta}_k:=\bar{G}_k-\nabla \Psi(\theta_k^{md})$ and $\bar{\delta}_{[k]}:={\bar{\delta}_1,...,\bar{\delta}_k} .$	Similar to the batched case, the procedure $condg$ solve the following problem with tolerance $\eta_k$.

	$$ \min_{x\in \Omega} \langle  \bar{G}_k,x \rangle+\frac{1}{2\lambda_k}\|\theta_{k-1}-x\|^2  $$

	Again, use the strong convexity of objective function (w.r.t.  x) we have

	\begin{equation}
	\begin{split}
	&\langle  \bar{G}_k,\theta \rangle+\frac{1}{2\lambda_k}\|\theta_{k-1}-\theta\|^2- \langle \bar{G}_k, \theta_k \rangle-\frac{1}{2\lambda_k}\|\theta_{k-1}-\theta_k\|^2\\
	\geq & \langle  \bar{G}_k+\frac{1}{\lambda_k} (\theta_k-\theta_{k-1}),\theta-\theta_k \rangle+\frac{1}{2\lambda_k}\|\theta-\theta_k\|^2.
	\end{split}
	\end{equation}
	
	Recall the termination condition
	
	$$ \langle \bar{G}_k+ \frac{1}{\lambda_k} (\theta_k-\theta_{k-1}) , \theta_k-\theta \rangle\leq \eta_k, \forall \theta\in \Omega $$
	
	and rearrange terms, we have
	
	\begin{equation}\label{equ.SCGS.mid}
	\langle \bar{G}_k,\theta_k-\theta \rangle\leq \frac{1}{2\lambda_k} \big( \|\theta_{k-1}-\theta\|^2-\|\theta_{k}-\theta\|^2- \|\theta_{k-1}-\theta_k\|^2       \big)+\eta_k. 
	\end{equation}
	
	We have similar result on $\theta_{k}^{ag}$, i.e.,

	\begin{equation}\label{equ.SCGS.mid2}
	\langle \bar{G}_k,\theta_k^{ag}-\theta \rangle\leq \frac{1}{2\beta_k} \big( \|\theta_k^{md}-\theta\|^2-\|\theta_{k}^{ag}-\theta\|^2- \|\theta_{k}^{ag}-\theta_k^{md}\|^2       \big)+\chi_k. 
	\end{equation}

	Now choose $\theta=\alpha_k\theta_k+(1-\alpha_{k})\theta_{k-1}^{ag}$ in \eqref{equ.optionII.mid2} and we have

	\begin{equation}\label{equ.SCGS.mid3}
	\begin{split}
	\langle \bar{G}_k,\theta_k^{ag}-\alpha_k\theta_k-(1-\alpha_k)\theta_{k-1}^{ag} \rangle\leq &\frac{1}{2\beta_k} \big(  \|\theta_k^{md}-\alpha_{k} \theta_k-(1-\alpha_k) \theta_{k-1}^{ag}\|^2-\|\theta_k^{ag}-\theta_{k}^{md}\|^2 \big)\\
	=& \frac{1}{2\beta_k}\big( \alpha_k^2\| \theta_k-\theta_{k-1} \|^2-\|\theta_k^{ag}-\theta_k^{md}\|^2      \big)+\chi_k.
	\end{split}
	\end{equation}
	
	Add \eqref{equ.SCGS.mid3} and $\alpha_{k}\times$\eqref{equ.SCGS.mid} together and recall the definition of $\bar{G}_k=\nabla F(\theta_k^{md
	})+\bar{\delta}_k$ we have
	
	\begin{equation}\label{equ.SCGS.mid4}
	\begin{split}
	&\langle \nabla F(\theta_k^{md})+\bar{\delta}_k, \theta_{k}^{ag}-\alpha_{k}\theta-(1-\alpha_k)\theta_{k-1}^{ag} \rangle\\
	\leq & \frac{\alpha_k}{2\lambda_k} (\|\theta_{k-1}-\theta\|^2-\|\theta_{k}-\theta\|^2)+\frac{\alpha_{k} (\lambda_k\alpha_k-\beta_k)}{2\beta_k\lambda_k}\|\theta_k-\theta_{k-1}\|^2-\frac{1}{2\beta_k}\|\theta_k^{ag}-\theta_k^{md}\|^2\\
	\leq & \frac{\alpha_k}{2\lambda_k} (\|\theta_{k-1}-\theta\|^2-\|\theta_{k}-\theta\|^2)-\frac{1}{2\beta_k}\|\theta_k^{ag}-\theta_k^{md}\|^2+\alpha_k\eta_k+\chi_k,
	\end{split}
	\end{equation}
	where the last inequality uses the assumption $\lambda_k\alpha_k\leq \beta_k$.
	
	Again use the smoothness of objective function $F(\theta)$, 	\begin{equation}\label{equ.SCGS.smooth}
	F(\theta_k^{ag})\leq F(\theta_k^{md})+\langle \nabla F(\theta_k^{md}), \theta_{k}^{ag}-\theta_k^{md} \rangle+\frac{L}{2}\|\theta_k^{ag}-\theta_k^{md}\|^2.
	\end{equation}
	
	Combine \eqref{equ.SCGS.smooth}, \eqref{equ.optionII.mid5} and \eqref{equ.SCGS.mid4} together, we obtain

	\begin{equation}
	\begin{split}
	&F(\theta_k^{ag})-F(\theta)\leq (1-\alpha_k) F(\theta_{k-1}^{ag})-(1-\alpha_{k})F(\theta) -\frac{1}{2}(\frac{1}{\beta_k}-L) \|\theta_{k}^{ag}-\theta_{k}^{md}\|^2\\
	&+\frac{\alpha_k}{2\lambda_k} ( \|\theta_{k-1}-\theta\|^2-\|\theta_{k}-\theta\|^2 )+\frac{l\alpha_{k}}{2} \|\theta_{k}^{md}-\theta\|^2+\frac{l\alpha_{k}^2 (1-\alpha)_k}{2} \|\theta_{k-1}^{ag}-\theta_{k-1}\|^2+\\
	&+\langle \bar{\delta}_k, \alpha_k (\theta-\theta_{k-1})+\theta_k^{md}-\theta_k^{ag} \rangle
	+\alpha_k\eta_k+\chi_k\\
	&\leq (1-\alpha_k) F(\theta_{k-1}^{ag})-(1-\alpha_{k})F(\theta) -\frac{1}{4}(\frac{1}{\beta_k}-L) \|\theta_{k}^{ag}-\theta_{k}^{md}\|^2+\frac{\beta_k \|\bar{\delta}_k\|^2 }{1-L\beta_k}\\
	&+\frac{\alpha_k}{2\lambda_k} ( \|\theta_{k-1}-\theta\|^2-\|\theta_{k}-\theta\|^2 )+\frac{l\alpha_{k}}{2} \|\theta_{k}^{md}-\theta\|^2+\frac{l\alpha_{k}^2 (1-\alpha)_k}{2} \|\theta_{k-1}^{ag}-\theta_{k-1}\|^2+\alpha_k\eta_k+\chi_k,
	\end{split}
	\end{equation}
	where the second inequality holds from the fact that $ab\leq \frac{a^2+b^2}{2}$.
	
	Again we apply Lemma \ref{lemma.Gamma} and have for $\forall \theta \in \Omega$
	
	\begin{equation}
	\begin{split}
	&\frac{F(\theta_N^{ag})-F(\theta)}{\Gamma_N}+\sum_{k=1}^{N} \frac{1-L\beta_k}{4\beta_k\Gamma_k} \|\theta_k^{ag}-\theta_k^{md}\|^2\\
	\leq &\frac{\|\theta_0-\theta\|^2}{2\lambda_1}+\frac{l}{2} \sum_{k=1}^{N} \frac{\alpha_k}{\Gamma_k} \big( \|\theta_k^{md}-\theta\|^2+\alpha_k(1-\alpha_k)\|\theta_{k-1}^{ag}-\theta_{k-1}\|^2 \big)+\sum_{k=1}^{N}\frac{\chi_k}{\Gamma_k}+ \sum_{k=1}^{N} \frac{\alpha_k}{\Gamma_{k}} \eta_k\\
	&+\sum_{k=1}^{N} \frac{\alpha_k}{\eta_k} \langle \bar{\delta
	}_k,\theta-\theta_{k-1} \rangle+\sum_{k=1}^{N} \frac{\beta_k \|\bar{\delta}_k\|^2}{\Gamma_k (1-L\beta_k)}.    
	\end{split}
	\end{equation}
	
	Now choose $\theta=\theta^*$, where $\theta^*$ is the  optimal solution, take expectation over both side with respect to $\delta_{[N]}$ and use the fact that  $\mathbb{E} \langle \bar{\delta}_k,\theta^*-\theta_{k-1}| \bar{\delta}_{[k-1]} \rangle=0 $ and \eqref{equ.average_variance},we have
	
	\begin{equation}
	\begin{split}
	&\frac{\mathbb{E}_{\delta_{[N]}} F(\theta_N^{ag})-F(\theta^*)}{\Gamma_N}+\sum_{k=1}^{N} \frac{1-L\beta_k}{4\beta_k\Gamma_k} \mathbb{E}_{\delta_{[N]}}\|\theta_k^{ag}-\theta_k^{md}\|^2\\
	\leq &\frac{\|\theta_0-\theta^*\|^2}{2\lambda_1}+\frac{L_f}{\Gamma_N} (\|\theta^*\|^2+2M^2)+\sigma^2 \sum_{k=1}^{N}\frac{\beta_k}{\Gamma_k (1-L\beta_k)m_k}+\sum_{k=1}^{N}\frac{\chi_k}{\Gamma_k}+ \sum_{k=1}^{N} \frac{\alpha_k}{\Gamma_{k}} \eta_k.
	\end{split}
	\end{equation}
	
	Using the definition of approximated gradient mapping $ \tilde{g} (\theta_{k}^{md},\bar{G}_k,\beta_k,\chi_k):=\frac{\theta_{k}^{md}-\theta_k^{ag}}{\beta_k}  $, we have
	
	\begin{equation}
	\begin{split}
	&\sum_{k=1}^{N} \frac{(1-L\beta_k)\beta_k}{4\Gamma_k} \mathbb{E}_{\delta_{[N]}}\|\tilde{g}(\theta_k^{md},\bar{G}_k,\beta_k,\xi_k) \|^2\\
	\leq &\frac{\|\theta_0-\theta^*\|^2}{2\lambda_1}+\frac{l}{\Gamma_N} (\|\theta^*\|^2+2M^2)+\sigma^2 \sum_{k=1}^{N}\frac{\beta_k}{\Gamma_k (1-L\beta_k)m_k}+\sum_{k=1}^{N}\frac{\chi_k}{\Gamma_k}+ \sum_{k=1}^{N} \frac{\alpha_k}{\Gamma_{k}} \eta_k.
	\end{split}
	\end{equation}
	
	Recall the setting of $\chi_k$, $\alpha_k$, $\beta_k$ and \eqref{equ.gamma_relation}, and notice the following fact (using Lemma \ref{lemma.bound_gradient_mapping}) 
	
	$$ \|\tilde{g} (\theta_{k}^{md},\bar{G}_k,\beta_k,\chi_k)- g(\theta_{k}^{md},\bar{G}_k,\beta_k )\|^2\leq \chi_k/\beta_k\leq \frac{L}{N}, $$
	 the fact
	$$ \mathbb{E} \| g(\theta_{k}^{md},\bar{G}_k,\beta_k )- g(\theta_{k}^{md},\nabla F(\theta_k^{md}),\beta_k )\|^2\leq \frac{\sigma^2}{m_k},$$
	
	and if we choose $p_k=\frac{\Gamma_k^{-1} \beta_k (1-L\beta_k)}{\sum_{k=1}^{N}\Gamma_k^{-1}\beta_k (1-L\beta_k)}=\frac{\Gamma_k^{-1}}{\sum_{k=1}^{N}\Gamma_k^{-1}}~,$  (note $\beta_k=\frac{1}{2L}, \Gamma_k=\frac{2}{k(k+1)}  $ by \eqref{equ.gamma_relation}),  
	we have 
	$$ \mathbb{E}  [\|g(\theta_{R}^{md},\bar{G}_R,\beta_R)\|^2]\leq 192L \big(  \frac{4L \|\theta_0-\theta^*\|^2}{N^2(N+1)}+\frac{l}{N} (\|\theta^*\|^2+2M^2)+\frac{3\sigma^2}{LN^3}\sum_{k=1}^{N} \frac{k^2}{m_k} +\frac{1}{N}  \big). $$
	
	Now choose $m_k=k$, we obtain
	
	$$ \mathbb{E}  [\|g(\theta_{k}^{md},\bar{G}_k,\beta_k)\|^2)]\leq 192L \big(  \frac{4L \|\theta_0-\theta^*\|^2}{N^2(N+1)}+\frac{l}{N} (\|\theta^*\|^2+2M^2)+\frac{3\sigma^2}{2LN} +\frac{1}{N}  \big). $$
	
\end{proof}

\begin{proof}[Proof of corollary \ref{Corollary.stochastic}]
	Recall $\bar{G}_k=\frac{1}{m_k}\sum_{i=1}^{m_k} G(x_k^{md},\xi_{k,i})$ , $m_k=k$ using the result of theorem \ref{Theorem.stochastic}, it is easy to see the SFO complexity is $\mathcal{O} (1/\epsilon^2)$.
	
	The proof on the LO complexity is same with option II in theorem \ref{Theorem.batch}. We need to calculate the steps to converges up to the tolerance in the procedure condg.  In particular, we have two calls of condg, where they need $\frac{1}{2\lambda_k}/\eta_k= \lceil \frac{2NL}{k} \rceil$ and  $\frac{1}{2\beta_k}/\chi_k=NL$ steps to converges with tolerance $\eta_k$ and $\chi_k$. Thus the total number of LO is $\mathcal{O} (N^2).$ 
\end{proof}

\begin{proof}
	To achieve $ \mathbb{E}[\| g(\theta_{\alpha},\nabla F(\theta_{\alpha}),\lambda)\|^2] \leqslant \frac{1}{\epsilon}$, the number of total iteration should be $T=\mathcal{O}(\frac{1}{\epsilon})$.
	The LO complexity per each inner iteration is $\mathcal{O}(T)$ by our choice of $\lambda$ and $\eta$, which gives us the overall LO complexity $\mathcal{O}(T^2)=\mathcal{O}(\frac{1}{\epsilon^2})$.
	The IFO complexity is given by $\frac{T}{m}(n+bm)$, sustituting our choice of $m,b$ gives IFO complexity being $\mathcal{O}(\frac{n^{\frac{2}{3}}}{\epsilon})$.
\end{proof}

\subsection{Proof of stochastic finite sum case}
The following lemma is used to control variance of stochastic gradient $v_t^{s+1}$ in non-convex setting.
\begin{lemma}
	In Algorithm \ref{alg:non-convex-finite-sum-SCGS}, we have:
	$$\mathbb{E} [\| \nabla F(\theta_t^{s+1})-v_t^{s+1}\|^2] \leqslant \frac{L^2}{b} \|\theta_t^{s+1}-\tilde{\theta}^s \|^2 $$
\end{lemma}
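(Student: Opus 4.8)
The plan is to recognize $v_t^{s+1}$ as an unbiased estimator of $\nabla F(\theta_t^{s+1})$ assembled from $b$ i.i.d. samples, and then to control its expected squared error by the variance-over-$b$ effect that minibatching provides. Throughout I condition on the past, i.e.\ I treat $\theta_t^{s+1}$ and $\tilde{\theta}^s$ as fixed and take the expectation only over the fresh minibatch $I_t$, which is drawn uniformly with replacement so that its $b$ indices are independent and identically distributed.

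First I would introduce, for a single uniform index $i\in\{1,\dots,n\}$, the per-sample vector $\zeta_i := \nabla f_i(\theta_t^{s+1}) - \nabla f_i(\tilde{\theta}^s)$, whose mean is $\mathbb{E}\zeta_i = \nabla F(\theta_t^{s+1}) - \nabla F(\tilde{\theta}^s)$. Since $g^{s+1} = \nabla F(\tilde{\theta}^s)$ holds exactly, a direct substitution gives $v_t^{s+1} - \nabla F(\theta_t^{s+1}) = \frac{1}{b}\sum_{i\in I_t}(\zeta_i - \mathbb{E}\zeta_i)$; that is, the estimator error is precisely the centered average of the $\zeta_i$. This simultaneously confirms unbiasedness and isolates exactly the quantity whose squared norm must be bounded.

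Next I would expand $\mathbb{E}\big\|\frac{1}{b}\sum_{i\in I_t}(\zeta_i-\mathbb{E}\zeta_i)\big\|^2$. Because the draws are independent and each centered term has zero mean, all cross terms vanish, leaving $\frac{1}{b^2}\sum_{i\in I_t}\mathbb{E}\|\zeta_i-\mathbb{E}\zeta_i\|^2 = \frac{1}{b}\,\mathbb{E}\|\zeta-\mathbb{E}\zeta\|^2$, which is where the $1/b$ gain appears. I would then drop to the second moment via $\mathbb{E}\|\zeta-\mathbb{E}\zeta\|^2 \le \mathbb{E}\|\zeta\|^2$, and finally apply the $L$-smoothness of each component $f_i$ to obtain $\mathbb{E}\|\zeta\|^2 = \frac{1}{n}\sum_{i=1}^n\|\nabla f_i(\theta_t^{s+1})-\nabla f_i(\tilde{\theta}^s)\|^2 \le L^2\|\theta_t^{s+1}-\tilde{\theta}^s\|^2$. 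Chaining these three bounds yields the claimed $\frac{L^2}{b}\|\theta_t^{s+1}-\tilde{\theta}^s\|^2$.

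There is no deep obstacle here; the computation is routine. The two places that deserve care are, first, using the with-replacement i.i.d.\ structure of $I_t$ to annihilate the cross terms and so recover the genuine $1/b$ factor, rather than settling for a crude $b$-fold triangle-inequality bound that would lose this improvement; and second, invoking $L$-smoothness componentwise (for each $f_i$) instead of for $F$, so that the single quantity $\|\theta_t^{s+1}-\tilde{\theta}^s\|^2$ controls every summand uniformly. Once these are in place, the variance-to-second-moment step is immediate and the result follows.
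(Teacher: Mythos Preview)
Your argument is correct and is in fact tighter than what the paper writes. The paper first applies $\mathbb{E}\|X-\mathbb{E}X\|^2\le\mathbb{E}\|X\|^2$ to drop the centering, and only afterwards invokes the crude inequality $\bigl\|\sum_{i=1}^b X_i\bigr\|^2\le b\sum_{i=1}^b\|X_i\|^2$; as you yourself flagged, once the centering is discarded this Cauchy--Schwarz step makes no use of independence, and the displayed chain actually terminates at $\frac{L^2}{b}\sum_{i\in I_t}\|\theta_t^{s+1}-\tilde\theta^s\|^2=L^2\|\theta_t^{s+1}-\tilde\theta^s\|^2$, i.e.\ \emph{without} the $1/b$ factor the lemma asserts. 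Your route---retain the centering, expand the squared norm of the average, annihilate the cross terms via the i.i.d.\ structure of $I_t$, then bound the single-sample variance by its second moment and apply componentwise $L$-smoothness---is the standard correct argument and delivers the stated $\frac{L^2}{b}\|\theta_t^{s+1}-\tilde\theta^s\|^2$ cleanly.
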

\begin{proof}
	\begin{align*}
	\mathbb{E} [\| \nabla F(\theta_t^{s+1})-v_t^{s+1} \|^2] & = \mathbb{E} [\| \frac{1}{b} \sum_{i \in I_t}(\nabla f_{i_t}(\theta_t^{s+1})-\nabla f_{i_t}(\tilde{\theta}^s))-(\nabla F(x_t^{s+1})-g^{s+1}) \|^2] \\
	& \leqslant \mathbb{E} [\| \frac{1}{b} \sum_{i \in I_t}(\nabla f_{i_t}(\theta_t^{s+1})-\nabla f_{i_t}(\tilde{\theta}^s)) \|^2]\\
	& \leqslant \mathbb{E} [\frac{1}{b} \sum_{i \in I_t} \| \nabla f_{i_t}(\theta_t^{s+1})-\nabla f_{i_t}(\tilde{\theta}^s) \|^2 ]\\
	& \leqslant \mathbb{E} [\frac{L^2}{b} \sum_{i \in I_t} \| \theta_t^{s+1}-\tilde{\theta}^s \|^2]\\
	& = \frac{L^2}{b} \sum_{i \in I_t} \| \theta_t^{s+1}-\tilde{\theta}^s \|^2
	\end{align*}
	where the first inequality uses bouding variance of random variable by second moment and the second inequality uses the fact that $\mathbb{E} [\|\sum_{i=1}^b X_i \|^2]\leqslant b\mathbb{E}[\sum_{i=1}^b \| X_i \|^2]$.
\end{proof}

We also need the following key lemma.
\begin{lemma}\label{mirror_descent_lemma}
	Let $y=cndg(\omega,x,\lambda,\eta)$, then we have:
	\begin{equation}\label{mirror_descent}
	F(y)\leqslant F(z)+\inner{y-z}{\nabla F(x)-\omega}+(\frac{L}{2}-\frac{1}{2\lambda})\|y-x\|^2+(\frac{L}{2}+\frac{1}{2\lambda})\|z-x\|^2-\frac{1}{2\lambda}\|y-z\|^2+\eta,\quad \forall z \in \mathbb{R}^d.
	\end{equation}
\end{lemma}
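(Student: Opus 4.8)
The plan is to combine the $L$-smoothness of $F$ with the strong convexity of the proximal subproblem that $cndg$ solves, paying for the inexactness $\eta$ through the procedure's termination condition. Recall that $y = cndg(\omega, x, \lambda, \eta)$ is an approximate minimizer of $\phi(\theta) := \inner{\omega}{\theta} + \frac{1}{2\lambda}\|\theta - x\|^2$ over $\Omega$, and that upon termination it satisfies $\inner{\omega + \frac{1}{\lambda}(y - x)}{y - u} \leq \eta$ for every $u \in \Omega$. This is exactly the inequality that will absorb the gap, so the proof is essentially a reorganization of quadratic terms around that guarantee.

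First I would extract a two-sided descent inequality from smoothness. Applying the upper smoothness bound to $F(y)$ anchored at $x$ gives $F(y) \leq F(x) + \inner{\nabla F(x)}{y - x} + \frac{L}{2}\|y - x\|^2$, while the lower (non-convex) smoothness bound applied to the pair $(z, x)$ yields $F(x) \leq F(z) - \inner{\nabla F(x)}{z - x} + \frac{L}{2}\|z - x\|^2$. Adding them and using $\inner{\nabla F(x)}{y - x} - \inner{\nabla F(x)}{z - x} = \inner{\nabla F(x)}{y - z}$ produces $F(y) \leq F(z) + \inner{\nabla F(x)}{y - z} + \frac{L}{2}\|y - x\|^2 + \frac{L}{2}\|z - x\|^2$. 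I would then split $\nabla F(x) = \omega + (\nabla F(x) - \omega)$, which generates precisely the target error term $\inner{y - z}{\nabla F(x) - \omega}$ and leaves $\inner{\omega}{y - z}$ to be controlled.

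The heart of the argument is bounding $\inner{\omega}{y - z}$. Since $\phi$ is $\frac{1}{\lambda}$-strongly convex with $\nabla \phi(y) = \omega + \frac{1}{\lambda}(y - x)$, the inequality $\phi(z) \geq \phi(y) + \inner{\nabla \phi(y)}{z - y} + \frac{1}{2\lambda}\|z - y\|^2$ rearranges into $\inner{\omega}{y - z} \leq \frac{1}{2\lambda}\|z - x\|^2 - \frac{1}{2\lambda}\|y - x\|^2 - \frac{1}{2\lambda}\|y - z\|^2 + \inner{\omega + \frac{1}{\lambda}(y - x)}{y - z}$. Taking $u = z$ in the termination condition bounds the last inner product by $\eta$. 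Substituting back into the smoothness inequality and collecting coefficients reproduces $(\frac{L}{2} - \frac{1}{2\lambda})\|y - x\|^2$, $(\frac{L}{2} + \frac{1}{2\lambda})\|z - x\|^2$, and $-\frac{1}{2\lambda}\|y - z\|^2$ exactly, which closes the estimate.

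The main obstacle is the bookkeeping around inexactness and the feasibility of $z$: the termination condition is only guaranteed for $u \in \Omega$, so the quantifier ``$\forall z \in \mathbb{R}^d$'' should be understood with $z$ feasible (and it is automatic in the finite-sum setting, where the feasible region is taken to be $\mathbb{R}^d$). The strong-convexity step itself is valid for every $z$ because $\phi$ is a global quadratic, so feasibility enters at exactly one point, the single use of the termination condition. Apart from that, I expect no genuine difficulty: once the split $\nabla F(x) = \omega + (\nabla F(x) - \omega)$ and the strong-convexity rearrangement are aligned, the remaining work is a routine matching of quadratic coefficients.
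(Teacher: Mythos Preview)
Your proposal is correct and follows essentially the same route as the paper: the paper also combines the two smoothness inequalities to get $F(y)\le F(z)+\inner{\nabla F(x)}{y-z}+\frac{L}{2}\|y-x\|^2+\frac{L}{2}\|z-x\|^2$, then bounds $\inner{\omega}{y-z}$ via the termination condition of $cndg$, the only cosmetic difference being that the paper writes the three-point identity $\inner{x-y}{y-z}=\tfrac{1}{2}(\|x-z\|^2-\|y-z\|^2-\|x-y\|^2)$ directly rather than casting it as strong convexity of $\phi$. Your remark on the feasibility of $z$ is apt and is a point the paper leaves implicit.
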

\begin{proof}
	By termination criteria of $cndg$ procedure, we have
	\begin{equation}
	\inner{\omega+\frac{1}{\lambda}(y-x)}{y-z}\leqslant \eta
	\end{equation}
	re-arrange terms we have
	\begin{align}
	\inner{\omega}{y-z} & \leqslant \frac{1}{\lambda}\inner{x-y}{y-z}+\eta \nonumber\\
	& = \frac{1}{2\lambda}(\|x-z\|^2-\|y-z\|^2-\|x-y\|^2)+\eta \label{proxineq1}.
	\end{align}
	Now by smoothness of $F(x)$ we have:
	\begin{align}
	F(y) & \leqslant F(x)+\inner{\nabla F(x)}{y-x}+\frac{L}{2}\|y-x\|^2 \nonumber \\
	& \leqslant F(z)+\inner{\nabla F(x)}{x-z}+\frac{L}{2}\|z-x\|^2+\inner{\nabla F(x)}{y-x}+\frac{L}{2}\|y-x\|^2 \nonumber\\
	& = F(z)+\inner{\nabla F(x)}{y-z}+\frac{L}{2}\|z-x\|^2+\frac{L}{2}\|y-x\|^2 \label{proxineq2}.
	\end{align}
	Add (\ref{proxineq1}) and (\ref{proxineq2}) together the result follows.
\end{proof}

Now we are ready to prove Theorem \ref{finite_sum_NCGS}.
\begin{proof}
	We first define $\hat{\theta}_{t+1}^{s+1}=\varphi(\theta_t^{s+1},\nabla F(\theta_t^{s+1}), \lambda)$. Then by a direct application of Lemma \ref{mirror_descent_lemma}(with $y=\hat{\theta}_{t+1}^{s+1}, z=x=\theta_t^{s+1}$), we have
	\begin{equation}
	F(\hat{\theta}_{t+1}^{s+1}) \leqslant F(\theta_t^{s+1})+(\frac{L}{2}-\frac{1}{2\lambda})\|\hat{\theta}_{t+1}^{s+1}-\theta_t^{s+1}\|^2-\frac{1}{2\lambda}\|\hat{\theta}_{t+1}^{s+1}-\theta_t^{s+1}\|^2 \label{app_ineq1}
	\end{equation}
	By a second application of Lemma \ref{mirror_descent_lemma}(with $y=\theta_{t+1}^{s+1}, z=\hat{\theta}_{t+1}^{s+1}, x=\theta_t^{s+1}$), we have
	\begin{align}
	F(\theta_{t+1}^{s+1}) & \leqslant F(\hat{\theta}_{t+1}^{s+1})+\inner{\theta_{t+1}^{s+1}-\hat{\theta}_{t+1}^{s+1}}{\nabla F(\theta_t^{s+1})-v_t^{s+1}}+(\frac{L}{2}-\frac{1}{2\lambda})\|\theta_{t+1}^{s+1}-\theta_{t}^{s+1}\|^2 \nonumber \\
	& \quad + (\frac{L}{2}+\frac{1}{2\lambda})\|\hat{\theta}_{t+1}^{s+1}-\theta_{t}^{s+1}\|^2-\frac{1}{2\lambda}\|\theta_{t+1}^{s+1}-\hat{\theta}_{t+1}^{s+1}\|^2+\eta  \nonumber \\
	& \leqslant F(\hat{\theta}_{t+1}^{s+1}) + \frac{1}{2\lambda}\|\theta_{t+1}^{s+1}-\hat{\theta}_{t+1}^{s+1}\|^2+\frac{\lambda}{2}\|\nabla F(\theta_t^{s+1})-v_t^{s+1} \|^2 + (\frac{L}{2}-\frac{1}{2\lambda})\|\theta_{t+1}^{s+1}-\theta_{t}^{s+1}\|^2 \nonumber \\
	& \quad + (\frac{L}{2}+\frac{1}{2\lambda})\|\hat{\theta}_{t+1}^{s+1}-\theta_{t}^{s+1}\|^2-\frac{1}{2\lambda}\|\theta_{t+1}^{s+1}-\hat{\theta}_{t+1}^{s+1}\|^2+\eta  \nonumber \\
	& \leqslant F(\hat{\theta}_{t+1}^{s+1}) +\frac{\lambda L^2}{2b}\|\theta_t^{s+1}-\tilde{\theta}^s\|^2 \nonumber \\
	& \quad +(\frac{L}{2}-\frac{1}{2\lambda})\|\theta_{t+1}^{s+1}-\theta_{t}^{s+1}\|^2 + (\frac{L}{2}+\frac{1}{2\lambda})\|\hat{\theta}_{t+1}^{s+1}-\theta_{t}^{s+1}\|^2+\eta \label{app_ineq2}
	\end{align}
	Then by adding (\ref{app_ineq1}) and (\ref{app_ineq2}) together we have
	\begin{align}
	F(\theta_{t+1}^{s+1}) & \leqslant F(\theta_t^{s+1})+(L-\frac{1}{2\lambda})\|\hat{\theta}_{t+1}^{s+1}-\theta_t^{s+1}\|^2 + (\frac{L}{2}-\frac{1}{2\lambda})\|\theta_{t+1}^{s+1}-\theta_{t}^{s+1}\|^2 \nonumber \\
	& \quad +\frac{\lambda L^2}{2b}\|\theta_t^{s+1}-\tilde{\theta}^s\|^2+\eta \label{value_ineq}
	\end{align}
	
	Now we define Lyapunov function as follows, with $c_m=0$ and $c_t=c_{t+1}(1+\beta)+\frac{\lambda L^2}{2b}$.
	$$L_t^{s+1}=F(\theta_{t}^{s+1})+c_t\|\theta_{t}^{s+1}-\tilde{\theta}^s\|^2$$
	By (\ref{value_ineq}), we have:
	\begin{align}
	L_{t+1}^{s+1} & = F(\theta_{t+1}^{s+1})+c_{t+1}\|\theta_{t+1}^{s+1}-\tilde{\theta}^s\|^2 \nonumber \\
	& \leqslant F(\theta_t^{s+1})+(L-\frac{1}{2\lambda})\|\hat{\theta}_{t+1}^{s+1}-\theta_t^{s+1}\|^2 + (\frac{L}{2}-\frac{1}{2\lambda})\|\theta_{t+1}^{s+1}-\theta_{t}^{s+1}\|^2 \nonumber \\
	& \quad +\frac{\lambda L^2}{2b}\|\theta_t^{s+1}-\tilde{\theta}^s\|^2+\eta+c_{t+1}\|\theta_{t+1}^{s+1}-\tilde{\theta}^s\|^2 \nonumber \\
	& \leqslant F(\theta_t^{s+1})+(L-\frac{1}{2\lambda})\|\hat{\theta}_{t+1}^{s+1}-\theta_t^{s+1}\|^2 + (\frac{L}{2}-\frac{1}{2\lambda})\|\theta_{t+1}^{s+1}-\theta_{t}^{s+1}\|^2 \nonumber \\
	& \quad +\frac{\lambda L^2}{2b}\|\theta_t^{s+1}-\tilde{\theta}^s\|^2+\eta+c_{t+1}(1+\frac{1}{\beta})\|\theta_{t+1}^{s+1}-\theta_{t}^{s+1}\|^2+c_{t+1}(1+\beta)\|\theta_{t}^{s+1}-\tilde{\theta}^{s}\|^2 \nonumber \\
	& = F(\theta_t^{s+1})+(L-\frac{1}{2\lambda})\|\hat{\theta}_{t+1}^{s+1}-\theta_t^{s+1}\|^2 + [c_{t+1}(1+\frac{1}{\beta})+\frac{L}{2}-\frac{1}{2\lambda}]\|\theta_{t+1}^{s+1}-\theta_{t}^{s+1}\|^2 \nonumber \\
	& \quad +[c_{t+1}(1+\beta)+\frac{\lambda L^2}{2b}]\|\theta_t^{s+1}-\tilde{\theta}^s\|^2+\eta \nonumber \\
	& = F(\theta_t^{s+1})+(L-\frac{1}{2\lambda})\|\hat{\theta}_{t+1}^{s+1}-\theta_t^{s+1}\|^2 + [c_{t+1}(1+\frac{1}{\beta})+\frac{L}{2}-\frac{1}{2\lambda}]\|\theta_{t+1}^{s+1}-\theta_{t}^{s+1}\|^2 \nonumber \\
	& \quad +[c_{t+1}(1+\beta)+\frac{\lambda L^2}{2b}]\|\theta_t^{s+1}-\tilde{\theta}^s\|^2+\eta \nonumber \\
	& \leqslant F(\theta_t^{s+1})+(L-\frac{1}{2\lambda})\|\hat{\theta}_{t+1}^{s+1}-\theta_t^{s+1}\|^2 +c_t\|\theta_t^{s+1}-\tilde{\theta}^s\|^2+\eta \nonumber \\
	& = L_t^{s+1}+(L-\frac{1}{2\lambda})\|\hat{\theta}_{t+1}^{s+1}-\theta_t^{s+1}\|^2 +\eta \label{telescope_ineq}
	\end{align}
	where the first inequality comes from (\ref{value_ineq}), the second inequality comes from Cauchy-Schwarz inequality and the final inequality comes from definition of $c_t$ and the fact that
	$c_{t+1}(1+\frac{1}{\beta})+\frac{L}{2}-\frac{1}{2\lambda}\leqslant 0$ for appropriate choice of $\beta$ and $\lambda$ which we now verify.
	By definition of $c_t$ we can easily find
	\begin{align}
	c_t & =\frac{\lambda L^2}{2b}\frac{(1+\beta)^{m-t}-1}{\beta} \nonumber \\
	& = \frac{\lambda L^2 m}{2b}((1+\frac{1}{m})^{m-t}-1) \quad (let \beta=\frac{1}{m}) \nonumber \\
	& \leqslant \frac{\lambda L^2 m}{2b}(e-1)  \leqslant \frac{\lambda L^2 m}{b}
	\end{align}
	Hence we have
	\begin{align}
	c_{t+1}(1+\frac{1}{\beta})+\frac{L}{2}-\frac{1}{2\lambda} & \leqslant \frac{\lambda L^2 m}{b}(1+m)+\frac{L}{2}-\frac{1}{2\lambda} \nonumber \\
	& \leqslant \frac{2 \lambda L^2 m^2}{b}+\frac{L}{2}-\frac{1}{2\lambda} \nonumber \\
	& \leqslant \frac{1}{2\lambda}(\frac{4\lambda^2 L^2 m^2}{b}+L\lambda-1) \leqslant 0 \label{par_condition}
	\end{align}
	where the last inequality comes from plug in back our specification of $\lambda,b,m$ in our theorem.
	Now by telescoping both side of (\ref{telescope_ineq}), we have:
	\begin{align}
	L_m^{s+1}+(\frac{1}{2\lambda}-L)\sum_{t=0}^{m-1} \|\hat{\theta}_{t+1}^{s+1}-\theta_t^{s+1}\|^2 \leqslant L_0^{s+1}+m\eta  \label{epoch_ineq1}
	\end{align}
	By using $c_m=0$ and that $\tilde{\theta}^{s+1}=\theta_m^{s+1}$ we have $L_m^{s+1}=F(\theta_m^{s+1})=F(\tilde{\theta}^{s+1})$, by using $\theta_0^{s+1}=\tilde{\theta}^s$, we have
	$L_0^{s+1}=F(\theta_0^{s+1})=F(\tilde{\theta}^s)$. Hence (\ref{epoch_ineq1}) becomes
	\begin{equation}
	(\frac{1}{2\lambda}-L)\sum_{t=0}^{m-1} \|\hat{\theta}_{t+1}^{s+1}-\theta_t^{s+1}\|^2 \leqslant F(\tilde{\theta}^s)-F(\tilde{\theta}^{s+1})+m\eta  \label{epoch_ineq2}
	\end{equation}
	Now telescope through all the epoch, we have:
	\begin{align}
	(\frac{1}{2\lambda}-L)\sum_{t=0}^{m-1} \sum_{s=0}^{S} \|\hat{\theta}_{t+1}^{s+1}-\theta_t^{s+1}\|^2 & \leqslant F(\tilde{\theta}^0)-F(\tilde{\theta}^{S+1})+T\eta \nonumber \\
	& \leqslant F(\theta_0)-F(\theta^{\star})+1
	\end{align}
	Now by definition of gradient mapping we have $\|\hat{\theta}_{t+1}^{s+1}-\theta_t^{s+1}\|^2=\lambda^2 \|g(\theta_t^{s+1},\nabla F(\theta_t^{s+1}), \lambda)\|^2$. Thus by definition of $\theta_{\alpha}$ we have:
	\begin{equation}
	\lambda^2(\frac{1}{2\lambda}-L)\mathbb{E} \|g(\theta_{\alpha},\nabla F(\theta_{\alpha}), \lambda)\|^2 \leqslant \frac{F(\theta_0)-F(\theta^{\star})+1}{T}
	\end{equation}
	plug in back the choice of $\lambda=\frac{1}{3L}$, the claim follows immediately.
\end{proof}

\end{document}